\theoremstyle{plain}
\newtheorem{theorem}{Theorem}
\newtheorem*{theorem*}{Theorem}
\newtheorem{lemma}[theorem]{Lemma}
\newtheorem{proposition}[theorem]{Proposition}
\newtheorem{corollary}[theorem]{Corollary}
\newtheorem*{corollary*}{Corollary}
\theoremstyle{definition}
\newtheorem{definition}[theorem]{Definition}
\theoremstyle{remark}
\newtheorem{remark}[theorem]{Remark}
\numberwithin{theorem}{section}
\numberwithin{equation}{section}
\renewcommand{\d}{\mathrm{d}}
\renewcommand{\leq}{\leqslant}
\renewcommand{\geq}{\geqslant}
\renewcommand{\epsilon}{\varepsilon}
\newcommand{\D}{\mathrm{D}}
\newcommand{\la}{\lesssim}
\newcommand{\scri}{\mathscr{I}}
\newcommand{\e}{\mathrm{e}}
\newcommand{\slashgrad}{\slashed{\nabla}}
\DeclareFontFamily{U}{MnSymbolC}{}
\DeclareSymbolFont{MnSyC}{U}{MnSymbolC}{m}{n}
\DeclareFontShape{U}{MnSymbolC}{m}{n}{
    <-6>  MnSymbolC5
   <6-7>  MnSymbolC6
   <7-8>  MnSymbolC7
   <8-9>  MnSymbolC8
   <9-10> MnSymbolC9
  <10-12> MnSymbolC10
  <12->   MnSymbolC12}{}
\DeclareMathSymbol{\intprod}{\mathbin}{MnSyC}{'270}
\newcommand*{\defeq}{\mathrel{\vcenter{\baselineskip0.5ex \lineskiplimit0pt
                     \hbox{\scriptsize.}\hbox{\scriptsize.}}}%
                     =}
\newcommand*{\eqdef}{=\mathrel{\vcenter{\baselineskip0.5ex \lineskiplimit0pt
                     \hbox{\scriptsize.}\hbox{\scriptsize.}}}%
                     }
\DeclareFontFamily{U}{BOONDOX-calo}{\skewchar\font=45 }
\DeclareFontShape{U}{BOONDOX-calo}{m}{n}{
  <-> s*[1.05] BOONDOX-r-calo}{}
\DeclareFontShape{U}{BOONDOX-calo}{b}{n}{
  <-> s*[1.05] BOONDOX-b-calo}{}
\DeclareMathAlphabet{\mathcalboondox}{U}{BOONDOX-calo}{m}{n}
\SetMathAlphabet{\mathcalboondox}{bold}{U}{BOONDOX-calo}{b}{n}
\DeclareMathAlphabet{\mathbcalboondox}{U}{BOONDOX-calo}{b}{n}
\DeclareMathOperator{\dvol}{dv}
\title{Large data decay of Yang--Mills--Higgs fields \\ on Minkowski and de Sitter spacetimes}
\author{Grigalius Taujanskas\footnote{Electronic address: \texttt{taujanskas@maths.ox.ac.uk}. This work was supported by the Engineering and Physical Science Research Council [EP/L05811/1].} \vspace{0.3cm} \\ \small \textsc{Mathematical Institute} \\ \small \textsc{Oxford University} \\ \small \textsc{Radcliffe Observatory Quarter} \\ \small \textsc{Oxford OX2 6GG, UK}}
\begin{document}

\maketitle

\abstract{In this article we extend Eardley and Moncrief's $L^\infty$ estimates \cite{EardleyMoncrief1982b} for the conformally invariant Yang--Mills--Higgs equations to the Einstein cylinder. Our method is to first work on Minkowski space and localize their estimates, and then carry them to the Einstein cylinder by a conformal transformation. By patching local estimates together, we deduce global $L^\infty$ estimates on the cylinder, and extend Choquet-Bruhat and Christodoulou's \cite{ChoquetBruhatChristodoulou1981} small data well-posedness result to large data. Finally, by employing another conformal transformation, we deduce exponential decay rates for Yang--Mills--Higgs fields on \mbox{de Sitter} space, and inverse polynomial decay rates on Minkowski space.}

\section{Introduction}

The analytical study of classical Yang--Mills--Higgs equations goes back to at least the late 1970s, with Segal's local existence proof \cite{Segal1979,Segal1963} on Minkowski space of pure $\mathrm{SU}(2)$ Yang--Mills fields with $H^3 \times H^2$ initial data. A short time after Segal's proof, in 1981, Ginibre and Velo \cite{GinibreVelo1981}, Choquet-Bruhat and Christodoulou \cite{ChoquetBruhatChristodoulou1981}, and Eardley and Moncrief \cite{EardleyMoncrief1982a,EardleyMoncrief1982b} all published proofs of similarly major results, though using profoundly different techniques. Ginibre and Velo's work extended Segal's work to coupled Yang--Mills--Higgs equations in arbitrary dimension, in particular proving global existence in two and three spacetime dimensions. In four dimensions, Choquet-Bruhat and Christodoulou made use of the conformal invariance of the Yang--Mills--Higgs--Dirac equations and a short-time existence theorem on the Einstein cylinder to prove the global existence of solutions on Minkowski space for sufficiently small $H^2 \times H^1$ initial data (\emph{cf.} \cite{ChoquetBruhatPaneitzSegal1983}). Eardley and Moncrief, on the other hand, instead developed a physical space technique for extracting remarkable a priori estimates that allowed them to prove the global existence of solutions for \emph{large} $H^2 \times H^1$ initial data. A short time later, Goganov and Kapitanskii published a proof of global unique solvability \cite{GoganovKapitanskii1985} for only locally $H^2 \times H^1$ data on Minkowski space, in particular allowing arbitrary magnetic charge at spatial infinity. Their proof in particular shows that the equations are well-posed in local lightcones, with solutions determined only by the data at the base of the lightcone. Further improvements have been obtained by Klainerman and Machedon \cite{KlainermanMachedon1993,KlainermanMachedon1994,KlainermanMachedon1995} and others \cite{Tao2003,SelbergTesfahun2013,SungJinOh2015}.

The strategy of Eardley and Moncrief is to write down wave equations for the fields $F$ and $\D \phi$, treat the nonlinear terms in these equations as sources, and express $F$ and $\D \phi$ at a point $p$ as integrals over the backward lightcone of $p$. Their key observation is that these lightcone integrals can be estimated by expressions of the form
\[ E_0 \int_0^t \left( \| F(s) \|_{L^\infty} + \| \D \phi(s) \|_{L^\infty} \right) \d s, \]
which implies, via Gr\"onwall's inequality, that the $L^\infty$ norms cannot blow up in finite time. Part of the trick is to define the $L^\infty$ norms in a gauge-independent manner, and use the Cr\"onstrom gauge in intermediate calculations. Equipped with this estimate, it is then straightforward to show that the $H^2 \times H^1$ norm of the solution does not blow up in finite time. An incarnation of this method has been adapted, for pure Yang--Mills equations, to arbitrary smooth globally hyperbolic four dimensional spacetimes by Chru\'sciel and Shatah \cite{ChruscielShatah1997}, by replacing the lightcone integrals with Friedlander's representation formula \cite{Friedlander1975} for the covariant wave equation. However, Chru\'sciel and Shatah require effectively $H^3 \times H^2$ data to deal with a term that causes difficulties in curved space\footnote{Eardley and Moncrief's result has been re-proven by Klainerman and Rodnianski by applying their newly developed Kirchoff--Sobolev parametrix for the wave equation \cite{KlainermanRodnianski2006}. A similar method has since been used by Ghanem \cite{Ghanem2016} to give another proof of the a priori estimates for pure Yang--Mills on curved spacetimes.}. Though the system has been well-studied, Eardley and Moncrief's method with $H^2 \times H^1$ data for coupled Yang--Mills and Higgs equations does not seem to have been explicitly adapted to curved space, even in the case of the Einstein cylinder. The scalar field part scales differently under a conformal transformation, putting it on unequal footing with the Yang--Mills potential. In particular, this upsets the conformal invariance of the system somewhat, breaking the invariance of the canonical energy-momentum tensor. And although formally the field equations remain conformally invariant, the scalar field introduces a boundary term in the conformal variation of the action that has a non-trivial dependence on the decay of the scalar field. This is expected to be of some importance in path integral formulations of interacting quantum field theories.

In this article we extend the $L^\infty$ estimates of Eardley and Moncrief to the Einstein cylinder. Our method is inspired by and combines the techniques of \cite{GoganovKapitanskii1985,ChoquetBruhatChristodoulou1981,EardleyMoncrief1982b}: we first work on Minkowski space and localize Eardley and Moncrief's estimates, removing the requirement of the global finiteness of the energy. Then, using a conformal transformation, we glue a small conical patch of Minkowski space onto the Einstein cylinder, and show that $L^\infty$ estimates in the Minkowskian patch imply local $L^\infty$ estimates on the cylinder. By patching a finite number of such cones all the way around the Einstein cylinder, we deduce $L^\infty$ bounds on any finite section of the cylinder. This allows us to show that Choquet-Bruhat and Christodoulou's small data result on the Einstein cylinder \cite{ChoquetBruhatChristodoulou1981} extends to large data, and consequently removes the small data restriction in the scattering theory of \cite{Taujanskas2018}. Finally, by using another conformal transformation, we deduce large data decay rates of Yang--Mills--Higgs fields on Minkowski and de Sitter spacetimes.

The structure of the paper is as follows. In \Cref{sec:YMH} we recall briefly the definitions of the Yang--Mills--Higgs fields and the associated field equations, as well as their conformal properties. In \Cref{sec:conventions} we introduce the conventions and notation that will be used throughout the paper. In \Cref{sec:localization} we sketch the method of Eardley and Moncrief and show that their estimates can be localized along the way. In \Cref{sec:gluing,sec:globalexistence} we glue the Minkowskian $L^\infty$ estimates onto the Einstein cylinder and use them deduce the global existence of Yang--Mills--Higgs fields on $\mathbb{R} \times \mathbb{S}^3$. Finally, in \Cref{sec:asymptotics} we deduce decay rates for the fields on Minkowski space and de Sitter space.

\textbf{Acknowledgements.} The author would like to thank Qian Wang and Lionel Mason for discussions which inspired this work, and Paul Tod and Jan Sbierski for valuable feedback.

\section{The Yang--Mills--Higgs Equations} \label{sec:YMH}

Let $\mathrm{G}$ be a connected matrix Lie group with a compact semi-simple Lie algebra $\mathfrak{g}$. In particular, we assume that $\mathfrak{g}$ is represented by a subalgebra of the algebra of real matrices equipped with the usual matrix commutator, and admits a positive-definite $\mathrm{Ad}$-invariant scalar product $\langle\cdot, \cdot\rangle$ given by
\[ \langle X,Y\rangle = - \operatorname{Tr}(XY) \quad \forall X, Y \in \mathfrak{g}. \]
 Let $\{ \theta_\alpha \}$ be the generators of $\mathfrak{g}$ in such a representation and let $f_{\alpha \beta}^{\phantom{\alpha \beta}\gamma}$ be the structure constants of $\mathfrak{g}$, so that
\[ [ \theta_{\alpha}, \theta_{\beta}] = f_{\alpha \beta}^{\phantom{\alpha \beta} \gamma} \theta_{\gamma}. \]
Since $\mathfrak{g}$ is semi-simple, $f_{\alpha \beta \gamma}$ can be chosen to be totally antisymmetric in its indices. Furthermore, the generators can be chosen to be real antisymmetric matrices satisfying
\[ \langle \theta_\alpha , \theta_\beta \rangle = \delta_{\alpha \beta}. \]
Let $\mathcalboondox{M}$ be a globally hyperbolic four-dimensional spacetime and let $P \to \mathcalboondox{M}$ be a principal $\mathrm{G}$-bundle over $\mathcalboondox{M}$. By global hyperbolicity, $\mathcalboondox{M}$ admits a Cauchy surface $\Sigma$ and a global smooth time function $t$ such that $\Sigma = \{ t = 0 \}$. Furthermore, the flow along the integral curves of the gradient of $t$ defines a diffeomorphism $\mathcalboondox{M} \simeq \mathbb{R} \times \Sigma$. If $P^\Sigma$ is the pullback of $P$ to $\Sigma$, this leads to the identification of $P \to \mathcalboondox{M}$ with $\mathbb{R} \times P^\Sigma \to \mathbb{R} \times \Sigma$.

The Yang--Mills potential $A$ is a connection on $P$, and in any trivialization of $P$ over a coordinate patch $\mathcalboondox{U}$ of $\mathcalboondox{M}$ is given by a $\mathfrak{g}$-valued $1$-form on $\mathcalboondox{U}$,
\[ A = A_a(x) \, \d x^a, \qquad A_a(x) = A_a^\alpha(x) \theta_\alpha \in \mathfrak{g} \]
for some real-valued functions $A^\alpha_a$ on $\mathcalboondox{U}$. The curvature of $A$ (or the Yang--Mills field) is the $\mathfrak{g}$-valued $2$-form 
\[F = F_{ab}(x) \, \d x^a \wedge \d x^b = ( F_{ab}^\alpha(x) \theta_\alpha) \, \d x^a \wedge \d x^b \]
given by 
\[ F_{ab} = \nabla_a A_b - \nabla_b A_a + [ A_a, A_b] \]
in $\mathcal{U}$, where $\nabla_a$ is the Levi--Civita connection on $\mathcalboondox{M}$. We shall denote the projections of $A$ and $F$ onto $\Sigma_t = \{ t = \text{const.} \} \times \Sigma$ by $\mathbf{A}_i$ and $F_{ij}$ respectively, where the indices $i$ and $j$ will range over $\{ 1,2,3 \}$, and $x^0 = t$. We define the Higgs field $\phi$ to be a section of the real vector bundle associated to the representation $\{ \theta_\alpha \}$. We will denote the inner product of such sections by $\phi \cdot \psi = \phi_\alpha \psi_\alpha$, and write, for example, $|\phi|^2 = \phi \cdot \phi = \phi_\alpha \phi_\alpha$. The gauge-covariant derivative $\D_a$ of $\phi$ is defined to be
\[ \D_a \phi = \nabla_a \phi + A_a \phi. \] Under a gauge transformation
\begin{align*} A_a &\leadsto U A_a U^{-1} + U \partial_a U^{-1}, \\
F_{ab} & \leadsto U F_{ab} U^{-1}, \\
\phi & \leadsto U \phi, \text{ and} \\
\D_a \phi & \leadsto U \D_a \phi
\end{align*}
for any smooth $\mathrm{G}$-valued function $U$ on $\mathcalboondox{M}$.

The conformally invariant Lagrangian for Yang--Mills--Higgs on $(\mathcalboondox{M}, g)$ is 
\begin{equation} \label{Lagrangian} \mathcal{L} = - \frac{1}{4} \langle F_{ab}, F^{ab} \rangle + \frac{1}{2} (\D_a \phi) \cdot (\D^a \phi) - \frac{1}{12} R | \phi|^2 - \frac{1}{4} \lambda |\phi|^4,	
\end{equation}
where $R$ is the scalar curvature of $g$ and $\lambda \geq 0$ is a constant. The Euler-Lagrange equations associated to \eqref{Lagrangian} are
\begin{equation} \label{fieldequations} \D^b F_{ab} = - ( (\D_a \phi) \cdot \theta_\alpha \phi ) \theta_\alpha \quad \text{and} \quad \D^a \D_a \phi + \frac{1}{6} R \phi + \lambda |\phi|^2 \phi = 0
\end{equation}
where
\[ \D_a F_{bc} = \nabla_a F_{bc} + [A_a, F_{bc}]. \]
The curvature also obeys the Bianchi identity
\begin{equation} \label{BianchiF} \D_{[a} F_{bc]} = 0. \end{equation}
Note that by virtue of $\lambda \geq 0$, the equation for $\phi$ is a \emph{defocussing} nonlinear wave equation. The stress-energy tensor for \eqref{Lagrangian} obtained by variation with respect to the metric is
\begin{align} \begin{split} \label{stresstensor} \mathbf{T}_{ab} & = 2 \frac{\delta \mathcal{L}}{\delta g^{ab}} - g_{ab} \mathcal{L} \\
& = - \langle F_{ac}, F_b^{\phantom{b}c} \rangle + \frac{1}{4} g_{ab} \langle F_{cd}, F^{cd} \rangle + (\D_{a} \phi) \cdot (\D_{b} \phi) - \frac{1}{2} g_{ab} (\D_c \phi) \cdot (\D^c \phi) - \frac{1}{6} G_{ab} |\phi|^2 + \frac{1}{4} \lambda g_{ab} |\phi|^4. 
\end{split}
\end{align}
As a consequence of the field equations \eqref{fieldequations}, $\mathbf{T}_{ab}$ satisfies the approximate conservation law 
\[ \nabla^a \mathbf{T}_{ab} = - \frac{1}{3} R_{ab} \phi \cdot \nabla^a \phi. \] 
It can be checked that the equations \eqref{fieldequations} are conformally invariant. That is, under the conformal transformation of the metric
\[ \hat{g}_{ab} = \Omega^2 g_{ab} \]
for some function $\Omega > 0$, the rescaled fields
\[ \hat{A}_a = A_a, \qquad \hat{F}_{ab} = F_{ab}, \qquad \hat{\phi} = \Omega^{-1} \phi \]
satisfy the rescaled field equations
\[ \hat{\D}^b \hat{F}_{ab} = - ( (\hat{\D}_a \hat{\phi}) \cdot \theta_\alpha \hat{\phi} )\theta_\alpha, \qquad \hat{\D}^a \hat{\D}_a \hat{\phi} + \frac{1}{6} \hat{R} \hat{\phi} + \lambda |\hat{\phi}|^2 \hat{\phi} = 0 \]
 if and only if the physical fields satisfy the equations \eqref{fieldequations}. Here $\hat{\D}_a \hat{\phi} = \hat{\nabla}_a \hat{\phi} + \hat{A}_a \hat{\phi}$, $\hat{\D}_a \hat{F}_{bc} = \hat{\nabla}_a \hat{F}_{bc} + [ \hat{A}_a, \hat{F}_{bc} ]$, $\hat{\nabla}_a$ is the Levi--Civita connection of $\hat{g}_{ab}$, and $\hat{R}$ is the scalar curvature of $\hat{g}_{ab}$. Note, however, that the stress-energy tensor \eqref{stresstensor} is \emph{not} conformally invariant. This is effectively due to the presence of the Higgs field $\phi$.

\section{Conventions and Notation} \label{sec:conventions}

Consider $(\mathcalboondox{M},g)$ a globally hyperbolic four dimensional Lorentzian manifold of signature $(+,-,-,-)$. We choose a global smooth time function $t$ such that $\nabla^a t$ is uniformly timelike on $\mathcalboondox{M}$, and assume that the metric $g$ takes the form
\[ g_{ab} = T_a T_b - h_{ab}, \quad \text{i.e.} \quad g = N^2 \d t^2 - h, \]
where $T^a$ is a smooth future-oriented uniformly timelike vector field with lapse function $N(t)$, $T^a = N^{-1} \partial_t$, and $h_{ab}$ is a smooth Riemannian metric for each fixed $t$. The vector field $T^a$ defines a foliation of $\mathcalboondox{M}$ by hypersurfaces $\Sigma_t$ of constant $t$, and identifies $\mathcalboondox{M} = \mathbb{R} \times \Sigma$, where each $\Sigma_t$ is diffeomorphic to $\Sigma$. We denote by $\nabla_a$ the Levi--Civita connection of $g$, and define the Sobolev spaces on the hypersurfaces $\Sigma_t$ with respect to the Riemannian metric $h(t)$. To be able to work with Sobolev spaces in spacetime, we define the four dimensional Riemannian metric
\[ \Gamma_{ab} \defeq 2 T_a T_b - g_{ab} = T_a T_b + h_{ab}, \]
and define Sobolev norms on general subsets of $\mathcalboondox{M}$ with respect to $\Gamma$. For example, for a matrix-valued $2$-form $F_{ab} = F_{ab}^\alpha \theta_\alpha$ on $\mathcalboondox{M}$ we set
\[ | F |^2_{\Gamma} \defeq \sum_\alpha F_{ab}^\alpha F^\alpha_{cd} \Gamma^{ac} \Gamma^{bd}, \]
and define
\[ \| F \|_{L^\infty(K)} \defeq \sup_K |F|_{\Gamma}  \]
for any $K \subset \mathcalboondox{M}$.

We will specifically work on three conformally related\footnote{By conformally related here we mean that there exist smooth non-negative functions $\Omega$ and $\omega$ such that $\mathfrak{e} = \Omega^2 \eta$ and $\mathfrak{e} = \omega^2 \tilde{g}$.} spacetimes of the above form: Minkowski space $(\mathbb{M} = \mathbb{R}^4, \eta)$, where 
\[ \eta = \d t^2 - \d r^2 - r^2 \mathfrak{s}_2, \]
the Einstein cylinder $(\mathfrak{E} = \mathbb{R} \times \mathbb{S}^3, \mathfrak{e})$, where
\[ \mathfrak{e} = \d \tau^2 = \mathfrak{s}_3, \]
and de Sitter space $(\mathrm{dS}_4 = \mathbb{R} \times \mathbb{S}^3, \tilde{g})$, where
\[ \tilde{g} = \d \alpha^2 - ( \cosh^2 \alpha) \mathfrak{s}_3. \]
Here $\mathfrak{s}_n$ represents the standard Riemannian metric on $\mathbb{S}^n$. Unless stated otherwise, we will denote the Levi--Civita connection on $\mathbb{M}$ by $\nabla_a$, the Levi--Civita connection on $\mathfrak{E}$ by $\hat{\nabla}_a$, and the Levi--Civita connection on $\mathrm{dS}_4$ by $\tilde{\nabla}_a$. We also denote the Levi--Civita connection on $\mathbb{R}^3$ by $\boldsymbol{\nabla}$ and the Levi--Civita connection on $\mathbb{S}^3$ by $\slashgrad$. In each of the three spacetimes one has a standard uniformly timelike vector field: $\partial_t$ in $\mathbb{M}$, $\partial_\tau$ in $\mathfrak{E}$, and $\partial_\alpha$ in $\mathrm{dS}_4$. We shall use these to define foliations of $\mathbb{M}$, $\mathfrak{E}$ and $\mathrm{dS}_4$, as described above. Given a solution $(A_a, \phi)$ to the Yang--Mills--Higgs equations on Minkowski space, we will denote the corresponding conformally related solution on the Einstein cylinder by $(\hat{A}_a, \hat{\phi})$, and the corresponding solution on de Sitter space by $(\tilde{A}_a, \tilde{\phi})$. The timelike components (corresponding to the time coordinate in each spacetime) of the Yang--Mills potential will be denoted with the index $0$, \emph{i.e.} $A_0 = (\partial_t)^a A_a$, $\hat{A}_0 = (\partial_\tau)^a \hat{A}_0$, and $\tilde{A}_0 = (\partial_\alpha)^a \tilde{A}_a$. We will denote by $\mathbf{A}$ (or $\hat{\mathbf{A}}$, or $\tilde{\mathbf{A}}$) the projection of $A$ onto the spacelike slice $\Sigma_t$ (or $\Sigma_\tau$, or $\Sigma_\alpha$ respectively), and define the electric and magnetic fields $\mathbf{E}$ and $\mathbf{B}$ on $\mathbb{M}$ by
\[ \mathbf{E}_i = F_{0i}, \quad \text{and} \quad \mathbf{B}^i = \frac{1}{2} \epsilon^{ijk} F_{jk}. \]
The electric and magnetic fields on $\mathfrak{E}$ and $\mathrm{dS}_4$ are defined similarly, and denoted $\hat{\mathbf{E}}$ and $\hat{\mathbf{B}}$, and $\tilde{\mathbf{E}}$ and $\tilde{\mathbf{B}}$ respectively. Here the Roman indices $i,j,k$ run over $\{1,2,3\}$ and denote contractions with the spatial basis vectors $\partial_i = \partial/\partial x^i$, $i=1,2,3$. We also define
\[ \pi = \D_0 \phi, \]
where $\D_a \phi  = \nabla_a \phi + A_a \phi$, and similarly define $\hat{\pi}$ and $\tilde{\pi}$. In intermediate calculations we shall want to manipulate the components of the Yang--Mills field $F_{ab}$ relative to a null tetrad $(l,n,e_A)$, $A \in \{ \theta, \phi \}$, and will denote by $F_{ln} = l^a F_{ab} n^b$, $F_{lA} = l^a F_{ab} (e_A)^b$, and so on.

Finally, in the analysis we shall use the letter $C$ to denote a constant that may change from line to line, and $p(t)$ to denote an arbitrary positive ``generalized" polynomial in $t$ perhaps containing positive fractional powers of $t$.

\section{Localized \texorpdfstring{$L^\infty$}{Linfty} Estimates on Minkowski Space} \label{sec:localization}

On Minkowski space the field equations \eqref{fieldequations} simplify to 
\begin{equation} \label{Minkowskifieldequations} \D^b F_{ab} = - ( (\D_a \phi) \cdot \theta_\alpha \phi) \theta_\alpha \quad \text{and} \quad \D^a \D_a \phi + \lambda |\phi|^2 \phi = 0.	
\end{equation}
In temporal gauge $A_0 = 0$ they further split into
\begin{equation} \label{Minkowskifieldequationstemporalgauge} \dot{\mathbf{E}}_i + \boldsymbol{\nabla}_j F_{ij} + [\mathbf{A}_j, F_{ij}] = ( (\boldsymbol{\D}_i \phi) \cdot \theta_\alpha \phi) \theta_\alpha, \qquad \dot{\pi} - \boldsymbol{\D}_i \boldsymbol{\D}_i \phi + \lambda |\phi|^2 \phi = 0, \end{equation}
and the constraint equation
\begin{equation} \label{Minkowskiconstraint} \boldsymbol{\nabla} \cdot \mathbf{E} + [\mathbf{A}_i, \mathbf{E}_i] = ( \pi \cdot \theta_\alpha \phi ) \theta_\alpha. \end{equation}
Of course, the constraint \eqref{Minkowskiconstraint} is propagated in the sense that it is satisfied for all time if it is satisfied initially. We will ultimately consider the system \eqref{Minkowskifieldequationstemporalgauge}--\eqref{Minkowskiconstraint}, but shall use the Cronstr\"om gauge to derive the intermediate a priori $L^\infty$ estimates.

By differentiating the Bianchi identity \eqref{BianchiF} and using the field equations \eqref{Minkowskifieldequations}, one derives a wave equation for the curvature $F$, which turns out to be
\begin{align} \begin{split} \label{waveeqnF} \Box F_{ab} &= \left((F_{ab} \phi) \cdot \theta_\alpha \phi \right) \theta_\alpha + \left( (\D_b \phi) \cdot \theta_\alpha (\D_a \phi) - (\D_a \phi) \cdot \theta_\alpha (\D_b \phi ) \right) \theta_\alpha \\ 
&- 2 \nabla^c \left( [A_c, F_{ab}] \right)	+ [\nabla_c A^c, F_{ab}] - [A^c, [A_c, F_{ab}]] - 2[F_b^{\phantom{b}c}, F_{ac}].
\end{split}	
\end{align}
By differentiating the wave equation for $\phi$ and using the field equation for $F$, one also derives
\[ \D^a \D_a (\D_b \phi) = \left( (\D_b \phi) \cdot \theta_\alpha \phi \right) \theta_\alpha \phi - 2 F_b^{\phantom{b}a} \D_a \phi - \lambda \D_b (|\phi|^2 \phi ), \]
which can be written as
\begin{equation} \label{waveeqnDphi}
	\begin{split} \Box (\D_b \phi) &= - 2 \nabla^a (A_a \D_b \phi) + (\nabla^a A_a) \D_b \phi - A^a A_a \D_b \phi \\
	& + \left( (\D_b \phi) \cdot \theta_\alpha \phi \right) \theta_\alpha \phi - 2 F_b^{\phantom{b}a} \D_a \phi - \lambda \D_b (|\phi|^2 \phi ).
	\end{split}
\end{equation}
Here $\Box$ denotes the standard wave operator on Minkowski space. It is worth observing that temporal gauge initial data $(\mathbf{A}, \mathbf{E}, \phi, \pi)$ for the equations \eqref{Minkowskifieldequationstemporalgauge} defines initial data for the wave equations \eqref{waveeqnF} and \eqref{waveeqnDphi}. Indeed, the data for $F$ is given by
\begin{align*} & F_{0i}|_{t=0} = \mathbf{E}_i, \qquad \partial_t F_{0i}|_{t=0} = - \boldsymbol{\nabla}_j F_{ij} - [ \mathbf{A}_j, F_{ij}] + \left( (\boldsymbol{\D}_i \phi ) \cdot \theta_\alpha \phi \right) \theta_\alpha, \\
& F_{ij}|_{t=0} = \boldsymbol{\nabla}_i \mathbf{A}_j - \boldsymbol{\nabla}_j \mathbf{A}_i + [\mathbf{A}_i, \mathbf{A}_j],  \qquad \partial_t F_{ij}|_{t=0} = \boldsymbol{\nabla}_i \mathbf{E}_j - \boldsymbol{\nabla}_j \mathbf{E}_i + [\mathbf{E}_i, \mathbf{A}_j] + [\mathbf{A}_i, \mathbf{E}_j],
\end{align*}
while data for $\D \phi$ is given by
\begin{align*} & \D_0 \phi|_{t=0} = \pi, \qquad \partial_t (\D_0 \phi)|_{t=0} = \boldsymbol{\D}_i \boldsymbol{\D}_i \phi - \lambda |\phi|^2 \phi, \\
& \boldsymbol{\D}_i \phi|_{t=0} = \boldsymbol{\nabla}_i \phi + \mathbf{A}_i \phi, \qquad \partial_t (\boldsymbol{\D}_i \phi)_{t=0} = \boldsymbol{\nabla}_i \pi + \mathbf{E}_i \phi + \mathbf{A}_i \pi.
\end{align*}
We will use the wave equations \eqref{waveeqnF} and \eqref{waveeqnDphi} to write down integral expressions for $F$ and $\D \phi$, which will be crucial for our analysis. Before we do that, however, we need a couple of preliminary tools.

\subsection{Conservation of Energy}

In standard coordinates on Minkowski space, the vector field $\partial_t$ is a globally defined uniformly timelike Killing field. Furthermore, the stress-energy tensor \eqref{stresstensor} is conserved, and becomes
\[ \mathbf{T}_{ab} = - \langle F_{ac}, F_b^{\phantom{b}c} \rangle + \frac{1}{4} \eta_{ab} \langle F_{cd}, F^{cd} \rangle + (\D_a \phi)\cdot (\D_b \phi) - \frac{1}{2} \eta_{ab} (\D_c \phi) \cdot (\D^c \phi) + \frac{1}{4} \lambda \eta_{ab} |\phi|^4. \]
Contracting $\mathbf{T}_{ab}$ with the Killing field $(\partial_t)^b$ defines a conserved current whose timelike component is
\[ \mathbf{T}_{00} = \frac{1}{2} \langle \mathbf{E}_i, \mathbf{E}_i \rangle + \frac{1}{2} \langle \mathbf{B}_i, \mathbf{B}_i \rangle + \frac{1}{2} \pi \cdot \pi + \frac{1}{2} (\boldsymbol{\D}_i \phi) \cdot (\boldsymbol{\D}_i \phi) + \frac{1}{4} \lambda |\phi|^4.  \]
It follows that the energy
\[ E_0(t) = \frac{1}{2} \int_{\mathbb{R}^3} \left( | \mathbf{E} |^2 + |\mathbf{B}|^2 + |\pi|^2 + | \boldsymbol{\D} \phi |^2 + \frac{1}{2} \lambda |\phi|^4 \right) \d^3 x \]
is conserved, where $|\mathbf{E}|^2 = \langle \mathbf{E}_i, \mathbf{E}_i \rangle$, and so on.

\begin{remark} \label{rmk:energycharges} In view of the conformal compactification of Minkowski space, note that generic $H^1(\mathbb{S}^3) \times L^2(\mathbb{S}^3)$ initial data on the Einstein cylinder will render $E_0 = \infty$ on Minkowski space, due to the unavoidable introduction of charges. Since Eardley and Moncrief's $L^\infty$ estimates rely on $E_0$ being finite (\emph{c.f.} \S1), this is the primary reason we need to make sure they can be localized. See \Cref{rmk:infiniteenergy} for more details.
\end{remark}

More generally, one may contract $\mathbf{T}_{ab}$ with any timelike Killing field $K^a$ to get a conserved current
\[ J_a \defeq \mathbf{T}_{ab} K^b, \]
and derive energy identities by integrating the identity $\nabla_a J^a = 0$ over bounded regions of spacetime. We will do so shortly to derive an energy identity on a lightcone. To do this, we equip ourselves with the following basis of vector fields,
\[ l^a = -\partial_t + \partial_r, \quad n^a = \partial_t + \partial_r, \quad e^a_\theta = \frac{1}{r} \partial_\theta, \quad e^a_\phi = \frac{1}{r \sin \theta} \partial_\phi.  \]
The vector fields $(l,n,e_A)$, $A \in \{\theta, \phi \}$, satisfy
\[ l_a l^a = 0 = n_a n^a, \quad l_a n^a = -2, \quad (e_A)_a (e_B)^a = - \delta_{AB}, \]
and the Minkowski metric can be written in terms of the basis $(l,n,e_A)$ as
\[ \eta_{ab} = - \frac{1}{2} (l_a n_b + l_b n_a) + (e_A)_a (e_A)_b, \]
where the index $A$ is summed over $\{\theta, \phi \}$. Similarly, the volume form can be written as
\[ \d t \wedge \d^3 x = \frac{1}{2} l^\flat \wedge n^\flat \wedge e_\theta^\flat \wedge e_\phi^\flat. \]
Putting $K^a = \partial_t$ and integrating $\nabla_a J^a = 0$ over the region bounded by the past lightcone of the origin $K = \{ t = -r \}$ and the surface $\Sigma = \{ t = - t_0 \}$, $t_0 > 0$, we get
\[ \frac{1}{2} \int_{B(r_0)} \left( | \mathbf{E} |^2 + | \mathbf{B} |^2 + |\pi|^2 + | \boldsymbol{\D} \phi|^2 + \frac{1}{2} \lambda |\phi|^4 \right) \d^3 x = - \int_{K(t_0)} (J_a l^a)(-r, r, \omega) \, r^2 \, \d r \, \d \Omega,  \]
where $K(t_0)$ is the past lightcone of the origin up to $t=-t_0$, and $B(r_0)$ is the solid ball in $\Sigma$ of radius $r_0 = t_0$. Expressing $K^a = \frac{1}{2}(n^a - l^a)$, we have
\begin{align} \begin{split} \label{energyidentitycone} & \frac{1}{2} \int_{B(r_0)} \left( | \mathbf{E} |^2 + | \mathbf{B} |^2 + |\pi|^2 + | \boldsymbol{\D} \phi|^2 + \frac{1}{2} \lambda |\phi|^4 \right) \d^3 x \\
& = \frac{1}{2} \int_{K(t_0)} \left( \frac{1}{4} |F_{ln}|^2 + |F_{lA}|^2 + \frac{1}{2} |F_{AB}|^2 + | \D_l \phi |^2 + |\D_A \phi |^2 + \frac{1}{2} \lambda |\phi|^4 \right) (-r,r,\omega) \, r^2 \, \d r \, \d \Omega .
\end{split}
\end{align}
We shall denote the left-hand side of the energy identity \eqref{energyidentitycone}, the energy in $B(r_0)$ at time $-t_0$, by $E_{B(r_0)}(-t_0)$.

\begin{definition} \label{defn:energyidentity} We define the \emph{local energy} $E_{\mathrm{loc}}(p)$ of a point $p=(t,x)$ by
	\begin{align*} E_{\mathrm{loc}}(p) &\defeq \sup_{s \in [0,t]} \frac{1}{2} \int_{B(x,t-s)} \left( |\mathbf{E}|^2 + |\mathbf{B}|^2 + |\pi|^2 + | \boldsymbol{\D} \phi|^2 + \frac{1}{2} \lambda |\phi|^4 \right) \d^3 x(s) \\
	& = \sup_{s \in [0,t]} E_{B(x,t-s)}(s),
	\end{align*}
	where $B(x,r)$ is the ball of radius $r$ centred at $x \in \mathbb{R}^3$.
	\begin{figure}[H]
\centering
	\begin{tikzpicture}
	\centering
	\node[inner sep=0pt] (localenergy) at (3.4,0)
    	{\includegraphics[width=.25\textwidth]{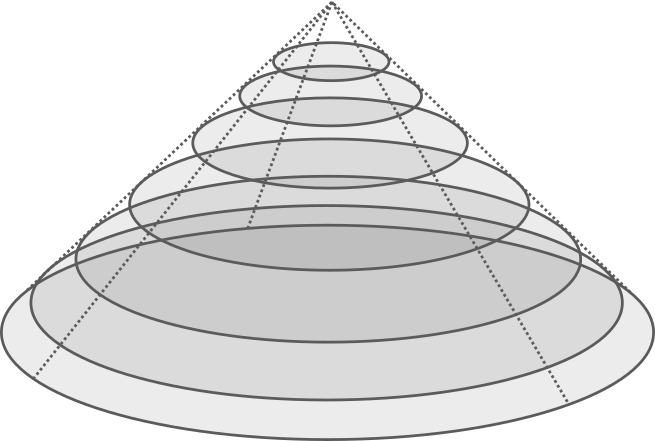}};

	\node[label={[shift={(6.5,-0.1)}]$t$}] {};
	\node[label={[shift={(3.4,1.5)}]$p$}] {};
	\draw[<->] (6.2,-.95) .. controls (6.2,0) .. (6.2,1.35);
	
	\end{tikzpicture}	
\end{figure}
\end{definition}

\begin{remark} It will be important to know that the local energy $E_{\mathrm{loc}}$ will be finite for $H^2_{\mathrm{loc}} \times H^1_{\mathrm{loc}}$ initial data for \eqref{Minkowskifieldequations}, for example by the results of Goganov--Kapitanskii \cite{GoganovKapitanskii1985}.	
\end{remark}

\subsection{The Cronstr\"om Gauge}

If $K(p)$ is the backwards lightcone from $p$ to the initial surface $\Sigma$ as above, we can choose an open set $S_p$ containing the set bounded by $K(p)$ and $\Sigma$ and impose the Cronstr\"om gauge in $S_p$. The Cronstr\"om gauge is defined by
\begin{equation} \label{Cronstromgauge} (x^a - x^a_p) A_a(x) = 0 \quad \text{and} \quad A_a(x_p) = 0 \quad \text{in } S_p,	
\end{equation}
and it can be shown \cite{EardleyMoncrief1982b} that on Minkowski space a given pair of fields $(A,\phi)$ can always be transformed to the Cr\"onstrom gauge in any star-shaped region (within the domain of existence of the solution). Furthermore, the associated gauge transformation is trivial at $p$, $U(x_p) = \mathbbm{1}$. An extremely useful feature of the Cronstr\"om gauge is that it allows one to express the Yang--Mills potential $A_a$ entirely in terms of the field $F_{ab}$. If we translate the origin to the point $p$ as before, one has
\begin{equation} \label{Cronstrompotential} A_b(x) = \int_0^1 s x^a F_{ab}(sx) \, \d s.
\end{equation}
From this one also derives
\begin{equation} \label{CronstromdivA} (\nabla_a A^a)(x) = \int_0^1 \left( s^2 x^a [F_{ab}(sx), A^b(sx)] - s^2 x^a \left( (\D_a \phi)(sx) \cdot \theta_\alpha \phi(sx) \right) \theta_\alpha \right) \d s.
\end{equation}
In the following estimates we will translate an arbitrary point $p = (t_0,x_0)$ to the origin for convenience, so that the initial data will sit at $\{ t = -t_0 \}$.  We will also write $E_{\mathrm{loc}}$ to denote $E_{\mathrm{loc}}(0)$, the local energy of the origin, where the lightcone considered will be of height $t_0$ to make contact with the initial data.

\subsection{Integral Representations and Localization}

We recall that on Minkowski space $(\mathbb{R}^4, \eta)$, $\eta = \d t^2 - \d r^2 - r^2 \mathfrak{s}_2$, the retarded Green's function $G$ for the wave operator $\Box$ is given by
\[ G(t,r) = \frac{1}{4 \pi r} \delta(t-r), \]
so that any solution $u$ to $\Box u = f$ can be written as
\[ u(t_0,x_0) = u^{(0)}(t_0,x_0) + (G*f)(t_0,x_0), \]
where $u^{(0)}$ is the solution to the free wave equation $\Box u^{(0)} = 0$ determined by the data for $u$. The convolution $G * f$ can be expressed as an integral over the past lightcone of $p = (t_0,x_0)$: translating $(t_0,x_0)$ to the origin for simplicity, we have
\begin{align*} (G*f)(0) &= \int_{\mathbb{R}} \d t \int_{\mathbb{R}^3} r^2 \, \d r \, \d \Omega \, G(-t,-x)f(t,x) \\
& = \int_{\mathbb{R}} \d t \int_{\mathbb{R}^3} r^2 \, \d r \, \d \Omega \, \frac{1}{4 \pi r} \delta(t+r) f(t,x) \\
& = \frac{1}{4 \pi} \int_K r \, \d r \, \d \Omega \, f(-r,x),
\end{align*}
where $K$ is the past lightcone of the origin. 

Suppose $p$ is a point in the domain of local existence of some solution $(A, \phi)$ in temporal gauge. We now impose the Cronstr\"om gauge in an open set $S_p$ containing the past lightcone $K(p)$ from $p$ to the initial surface $\Sigma$, as described above. Note that the gauge transformation taking the temporal gauge solution $(A,\phi)$ to the Cronstr\"om gauge has $U(p) = \mathbbm{1}$, so it follows that $F(p)$, $\phi(p)$, and $(\D \phi)(p)$ are invariant under the gauge transformation. Using the above observation, we express the solutions to the wave equations \eqref{waveeqnF} and \eqref{waveeqnDphi} at $p$ as integrals of the nonlinearities (in Cronstr\"om gauge) over the past lightcone $K(p)$ of $p$ up to the initial surface $\Sigma$. Translating the point $p = (t_0,x_0)$ to the origin for convenience, the initial surface ends up at $\Sigma = \{ t = - t_0 \}$, and we find 
\begin{align} \label{lightconeintegralF}
	\begin{split} F_{\mu \nu}(0) & = \textcolor{olive}{F^{(0)}_{\mu \nu}(0)} \\
	& + \frac{1}{4\pi} \int_{K(t_0)} r \, \d r \, \d \Omega \, \Big\{ -2 \textcolor{olive}{ \nabla^c ( [A_c, F_{\mu \nu}])} + \textcolor{blue}{[\nabla_c A^c, F_{\mu \nu}]} - \textcolor{blue}{[A^c, [A_c, F_{\mu \nu}]]} \\
	&  + \textcolor{purple}{\left( (\D_\nu \phi) \cdot \theta_\alpha (\D_\mu \phi) - (\D_\mu \phi) \cdot \theta_\alpha (\D_\nu \phi) \right) \theta_\alpha} - 2 \textcolor{purple}{[F_\nu^{\phantom{\nu}c}, F_{\mu c}]} + \textcolor{orange}{\left( (F_{\mu \nu} \phi) \cdot \theta_\alpha \phi \right) \theta_\alpha} \Big\}(-r,r,\omega)
	\end{split}
\end{align}
and
\begin{align} \label{lightconeintegralDphi}
	\begin{split} (\D_\nu \phi)(0) & = \textcolor{olive}{(\D_\nu \phi)^{(0)}(0)} \\
	& + \frac{1}{4 \pi} \int_{K(t_0)} r \, \d r \, \d \Omega \, \Big\{ -2\textcolor{olive}{ \nabla^c ( A_c \D_\nu \phi)} + \textcolor{blue}{(\nabla^c A_c ) \D_\nu \phi} - \textcolor{blue}{A^c A_c \D_\nu \phi} - 2 \textcolor{purple}{F_\nu^{\phantom{\nu}c} \D_c \phi} \\
	& + \textcolor{orange}{\left( (\D_\nu \phi) \cdot \theta_\alpha \phi\right) \theta_\alpha \phi} - \textcolor{orange}{\lambda \D_\nu ( |\phi|^2 \phi)} \Big\}(-r,r,\omega),
	\end{split}
\end{align}
where the indices $\mu,\nu$ indicate contraction with the basis vectors $\partial/\partial x^\mu$, $\partial/\partial x^\nu$, so that $F_{\mu \nu}$ transforms as a scalar.

\begin{lemma} \label{lem:localizedestimates} The $L^\infty$ estimates of Eardley and Moncrief can be localized entirely to the lightcone. Specifically, one has the estimate
\[ N(t) \leq p(t) + q(t) \int_0^t N(s) \, \d s, \]
where
\[ N(s) = \| F(s) \|^2_{L^\infty(B(t-s))} + \| \D \phi(s) \|^2_{L^\infty(B(t-s))}, \]
and $p(t)$ and $q(t)$ are positive polynomials (perhaps containing positive fractional powers) in $t$, with coefficients depending on the $(H^2(B(t)) \times H^1(B(t)))^2$ norms of the temporal gauge initial data, the local energy $E_{\mathrm{loc}}$ in the lightcone from $p$ to $\Sigma$, and the $L^2$ norm of $\phi$ on $B(t) \cap \Sigma$.
\end{lemma}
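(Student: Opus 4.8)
The plan is to feed the integral representations \eqref{lightconeintegralF} and \eqref{lightconeintegralDphi} into gauge-invariant pointwise bounds for $|F(p)|$ and $|(\D\phi)(p)|$ at the tip $p=(t,x_0)$ of the backward cone, and to show that every term on the right is controlled either by a data-dependent quantity or by $E_{\mathrm{loc}}^{1/2}\big(\int_0^t N(s)\,\d s\big)^{1/2}$; squaring the resulting inequalities then produces $N(t)\le p(t)+q(t)\int_0^t N(s)\,\d s$, because the half-power of the time integral squares precisely to $\int_0^t N$. First I would dispose of the free pieces $F^{(0)}_{\mu\nu}(0)$ and $(\D_\nu\phi)^{(0)}(0)$: these solve the free wave equation with the Cauchy data written out just before the statement, so by the Kirchhoff representation together with local Sobolev embedding their values at $p$ are bounded by the $H^2(B(t))\times H^1(B(t))$ norms of $(\mathbf A,\mathbf E,\phi,\pi)$, contributing to $p(t)$.

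The delicate terms are the total-divergence (olive) terms $-2\nabla^c([A_c,F_{\mu\nu}])$ and $-2\nabla^c(A_c\D_\nu\phi)$, which naively cost one derivative of $F$ and $\D\phi$ — exactly the term that forced Chru\'sciel--Shatah to assume $H^3\times H^2$ regularity. I would handle these by integrating by parts in the convolution with $G=\tfrac{1}{4\pi r}\delta(t-r)$, moving $\nabla^c$ onto $G$. The $\delta'$-part of $\nabla^c G$ carries the transverse, off-cone derivative, but its coefficient is proportional to the contraction of $[A_c,F_{\mu\nu}]$ with the null generator $l^c$ of the cone, namely $[A_l,F_{\mu\nu}]$; since on the backward cone one has $x^a=r\,l^a$, the Cronstr\"om condition \eqref{Cronstromgauge} forces $A_l=0$ along the entire cone, so this top-order contribution vanishes identically. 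What survives is the $\delta$-part, which is algebraic in the fields and of the same type as the remaining nonlinearities. This is the step I expect to be the main obstacle, both because it is where the borderline $H^2\times H^1$ regularity is salvaged and because it must be arranged so that only data on the finite local cone ever enters.

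For the remaining algebraic terms I would use the Cronstr\"om formulae \eqref{Cronstrompotential} and \eqref{CronstromdivA} to bound $A$ and $\nabla^a A_a$ pointwise in terms of $F$, $\D\phi$ and $\phi$ with no derivatives, and, after a Cauchy--Schwarz along the generating ray followed by Fubini, to bound the spherical integrals $\int_{S^2}|A|^2\,\d\Omega\la E_{\mathrm{loc}}/r$ (and similarly for $\nabla A$) by the local flux. The key device for every genuinely quadratic cone integral is the pointwise splitting $r\,|\cdot|^2=(r^2|\cdot|^2)^{1/2}(|\cdot|^2)^{1/2}$ followed by Cauchy--Schwarz in $\d r\,\d\Omega$: the first factor is the energy flux of \eqref{energyidentitycone}, bounded by $E_{\mathrm{loc}}$, while the second factor is $\int_0^t\!\int_{S^2}|F|^2\,\d\Omega\,\d r\la\int_0^t N(s)\,\d s$, since the spheres foliating the cone at parameter $r=t-s$ are exactly the boundaries of the balls $B(t-s)$ defining $N$. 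The cubic potential terms collapse the same way once one factor of $A^2$ has been traded for $E_{\mathrm{loc}}$. Finally, the undifferentiated Higgs field in the orange terms is controlled by integrating $\D_l\phi=\partial_l\phi$ (again using $A_l=0$) along the null generators from the initial sphere, so that $\phi$ on the cone is bounded by the $L^2$ data of $\phi$ on $B(t)\cap\Sigma$ together with the flux of $\D\phi$; when $\lambda>0$ the term $\tfrac12\lambda|\phi|^4$ already present in \eqref{energyidentitycone} supplies extra control.

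Assembling these bounds gives $|F(p)|+|(\D\phi)(p)|\le \sqrt{p(t)}+\sqrt{q(t)}\,\big(\int_0^t N(s)\,\d s\big)^{1/2}$, with $p,q$ depending only on the $(H^2\times H^1)^2$ data norms over $B(t)$, on $E_{\mathrm{loc}}$, and on $\|\phi\|_{L^2(B(t)\cap\Sigma)}$; squaring and taking the supremum over $x_0$ (hence over the tip of every subcone) yields the stated inequality, the fractional powers of $t$ in $p,q$ arising from the Cauchy--Schwarz steps. Beyond the derivative term, the points requiring the most care are keeping every estimate strictly local — replacing Eardley--Moncrief's global energy $E_0$ by the flux $E_{\mathrm{loc}}$ through the finite cone — and closing the Higgs estimate in the massless case $\lambda=0$, where $\phi$ is controlled purely by propagation along the generators.
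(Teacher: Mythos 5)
Your overall architecture is the same as the paper's (which itself defers the detailed estimates to Eardley--Moncrief): the lightcone representations \eqref{lightconeintegralF}--\eqref{lightconeintegralDphi}, the Cronstr\"om expressions \eqref{Cronstrompotential}--\eqref{CronstromdivA} to eliminate $A$ and $\nabla^a A_a$, the localized flux identity \eqref{energyidentitycone} in place of the global energy, Cauchy--Schwarz splittings pairing one flux-controlled null component against one $L^\infty$ factor, and squaring at the end. However, the step you yourself flag as the main obstacle is wrong as stated. The $\delta'$-part of $\nabla^c G$ indeed has coefficient $[A_l,F_{\mu\nu}]$, and $A_l=0$ on the cone; but a $\delta'$ paired with a coefficient that merely vanishes \emph{on} the cone does not vanish identically. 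Since $h(0)=0$ implies $\delta'(u)\,h(u)=-h'(0)\,\delta(u)$, the term reduces to an on-cone integral of the \emph{transverse} derivative $[\partial_u A_l,F_{\mu\nu}]$, with $u=t+r$, and $\partial_u A_l\neq 0$ on the cone: using $F_{ab}=\partial_a A_b-\partial_b A_a+[A_a,A_b]$ and $A_l=0$ there, one finds $\partial_u A_l=\tfrac12\left(F_{nl}+\partial_l A_n\right)$ on the cone. So the correct mechanism is reduction to lower order, not cancellation: one picks up a flux-controlled piece $[F_{nl},F_{\mu\nu}]$ (of your ``purple'' type) plus a tangential derivative $\partial_l A_n$ that must be integrated by parts along the generators, producing exactly the initial-data boundary terms on $\Sigma\cap K(t_0)$ that the paper invokes via equation (2.39) of Eardley--Moncrief. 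The conclusion you want survives, but your claim of identical vanishing would not survive being written out.

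On the undifferentiated Higgs field you genuinely deviate from the paper, which bounds $\|\phi\|_{L^4(K)}$ and $\|\phi\|_{L^6(K)}$ by Gagliardo--Nirenberg together with the gauge-invariant Jaffe--Taubes Sobolev inequality, and controls $\|\phi\|_{L^2(K)}$ by $E_{\mathrm{loc}}$ and $\|\phi\|_{L^2}$ at the base via an appendix lemma (integrating $\partial_t|\phi|^2$ over the solid cone). Your alternative --- integrating $\D_l\phi=\partial_l\phi$ along the generators --- is more elementary and can be made to close, but only because of the factor $r$ in the cone measure, a point you do not address: the flux controls $\int (r')^2|\D_l\phi|^2$, not $\int|\D_l\phi|^2$, so a naive Cauchy--Schwarz along a generator degenerates at the tip. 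One needs the weighted splitting
\[ \int_r^{t_0}|\D_l\phi|\,\d r' \leq \left(\int_r^{t_0}(r')^{-2}\,\d r'\right)^{1/2}\left(\int_r^{t_0}(r')^2|\D_l\phi|^2\,\d r'\right)^{1/2}, \]
whose first factor $\la r^{-1/2}$ is exactly absorbed by the measure weight, giving $r\left(\int_r^{t_0}|\D_l\phi|\,\d r'\right)^2\la E_{\mathrm{loc}}$. Note also that your boundary term involves pointwise values of $\phi$ on the base sphere, controlled by the $H^2(B(t))$ data norm via trace/Sobolev rather than by $\|\phi\|_{L^2(B(t))}$ alone --- still consistent with the lemma's stated dependence, but it should be said. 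Finally, in the purple terms the splitting is not symmetric: in a product such as $F_{\nu l}\D_n\phi$ the factor $\D_n\phi$ is \emph{not} flux-controlled, so one must check (as the paper does with the orthogonal decomposition $F_{il}=O_{i1}\tfrac12 F_{nl}+O_{iA}F_{Al}$) that the companion factor always decomposes into flux components; your sketch glosses over this.
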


\begin{proof} The terms on the right-hand sides of \eqref{lightconeintegralF} and \eqref{lightconeintegralDphi} are categorized by colour according to the types of techniques, due to Eardley and Moncrief \cite{EardleyMoncrief1982b}, required to estimate them. The olive-coloured terms in each equation (the linear part of the solution and the first term inside the integral) can be expressed explicitly in terms of the initial data; the blue terms (the second and third terms in each integral) are dealt with by using the Cronstr\"om gauge expressions \eqref{Cronstrompotential} and \eqref{CronstromdivA}; the purple terms (the fourth and fifth terms in the integral for $F$ and the fourth term in the integral for $\D \phi$) may be estimated by observing that they all contain exactly one factor encoding the flux across the lightcone; finally, the orange terms (the last term in the integral for $F$ and the last two terms in the integral for $\D \phi$) are estimated by relatively simple applications of the H\"older inequality and the Sobolev embedding theorems.

We briefly show how to localize one term from each colour class. No new techniques are required, and we refer the reader interested in the original derivation of the estimates to \cite{EardleyMoncrief1982b}. The olive terms 
\[ \textcolor{olive}{I^\text{olive}_1} \defeq F^{(0)}_{\mu \nu}(0) - \frac{1}{2 \pi} \int_{K(t_0)} \nabla^c ( [A_c, F_{\mu \nu}] ) \, r \, \d r \, \d \Omega \]
may be expressed explicitly, using the method of spherical means for the first term and by integrating by parts and using the condition $x^a A_a = 0$ for the second term, in terms of the temporal gauge initial data on the $2$-sphere defined by $\Sigma \cap K(t_0)$. Likewise for the terms
\[ \textcolor{olive}{I^\text{olive}_2} \defeq (\D_\nu \phi)^{(0)}(0) -\frac{1}{2 \pi} \int_{K(t_0)} \nabla^c(A_c \D_\nu \phi) \, r \, \d r \, \d \Omega. \]
The details are contained in equation $(2.39)$ of \cite{EardleyMoncrief1982b}.

For the blue terms, let us consider 
\[ \textcolor{blue}{I^\text{blue}} \defeq \int_{K(t_0)} (\nabla^c A_c) (\D_\nu \phi) \, r \, \d r \, \d \Omega. \]
Using the Cronstr\"om gauge expression \eqref{CronstromdivA} and the fact that $x^a F_{ab} = r l^a F_{ab} = r F_{lb}$ for $x \in K$, we find
\begin{align*} \textcolor{blue}{I^\text{blue}} & = \int_0^{r_0} \d r \int_{\mathbb{S}^2} \d \Omega \, r \int_0^1 \d s \Big\{ s^2 \left.[x^a F_{ab}(sx), A^b(sx)]\right|_K \\
& - s^2	 \left( x^a (\D_a \phi)(x)|_K \cdot (\theta_\alpha \phi)(sx)|_K \theta_\alpha \right) \Big\} ( \D_\nu \phi)(x)|_K \\
& = \int_0^{r_0} \d r \int_{\mathbb{S}^2} \d \Omega \, r \int_0^1 \d s \, s^2 r \left. [ F_{lb}(sx), A^b(sx)]\right|_K (\D_\nu \phi)(x)|_K \\
& - \int_0^{r_0} \d r \int_{\mathbb{S}^2} \d \Omega \, r \int_0^1 \d s \, s^2 r \left( (\D_l \phi)(sx)|_K \cdot (\theta_\alpha \phi)(sx)|_K \theta_\alpha \right) (\D_\nu \phi)(x)|_K \\
& \eqdef \textcolor{blue}{I^\text{blue}_1} - \textcolor{blue}{I^\text{blue}_2}.
\end{align*}
Consider the above summands separately. Using \eqref{Cronstrompotential} and making the change of variables $(sr, ur) = (r', \bar{r})$, for the first one we have
\begin{align*} \textcolor{blue}{I^\text{blue}_1} & = \int_0^{r_0} \d r \int_{\mathbb{S}^2} \d \Omega \, r^3 \int_0^1 \d s \, s^2 \int_0^1 \d u \, u  \left.[ F_{lb}(sx), F_l^{\phantom{l}b}(ux) ]\right|_K (\D_\nu \phi)(x)|_K \\
& = \int_0^{r_0} \d r \int_{\mathbb{S}^2} \d \Omega \int_0^1 \d s \int_0^1 \d u \, r^3 s^2 u [ F_{lA}(-sr,sr,\omega), F_{lA}(-ur,ur,\omega) ] (\D_\nu \phi)(-r,r,\omega) \\
& = \int_0^{r_0} \d r \frac{1}{r^2} \int_{\mathbb{S}^2} \d \Omega \int_0^r \d r' \int_0^r \d \bar{r} \, (r')^2 \bar{r} [ F_{lA}(-r',r',\omega), F_{lA}(-\bar{r}, \bar{r},\omega) ] (\D_\nu \phi)(-r,r,\omega) \\
& \leq C \int_0^{r_0} \d r \frac{1}{r^2} \int_{\mathbb{S}^2} \d \Omega \int_0^r \d r' \int_0^r \d \bar{r} \, (r')^2 \bar{r} |F_{lA}(-r',r',\omega) | |F_{lA} ( - \bar{r}, \bar{r}, \omega ) \| \D \phi (-r) \|_{L^\infty(B(r))} \\
& \leq C \int_0^{r_0} \d r \frac{1}{r} \int_{\mathbb{S}^2} \d \Omega \left( \int_0^r \d r' \, r' | F_{lA}(-r',r',\omega) | \right)^2 \| \D \phi (-r) \|_{L^\infty(B(r))} \\
& \leq C \int_0^{r_0} \d r \, \| F_{lA} \|^2_{L^2(K(r))} \| \D \phi(-r) \|_{L^\infty(B(r))},
\end{align*}
where $|F|$ denotes the Frobenius norm of $F$, $K(r)$ is the subcone of $K(t_0)$ of height $r$, and we have used the Cauchy--Schwarz inequality in the last line. Using the energy identity \eqref{energyidentitycone}, we thus have the estimate
\[ \textcolor{blue}{I^\text{blue}_1} \leq C E_{\mathrm{loc}} \int_0^{t_0} \| \D \phi(-t) \|_{L^\infty(B(t))} \, \d t. \]
To estimate $\textcolor{blue}{I^\text{blue}_2}$, we make the same change of variables $sr = r'$ to get
\begin{align*} \textcolor{blue}{I^\text{blue}_2} & = \int_0^{r_0} \d r \int_{\mathbb{S}^2} \d \Omega \int_0^r \d r' \frac{1}{r} (r')^2 \left( (\D_l \phi)(-r',r',\omega) \cdot (\theta_\alpha \phi)(-r',r',\omega) \right) (\theta_\alpha \D_\nu \phi)(-r,r,\omega) \\
& \leq C \int_0^{r_0} \d r \, \| \D \phi(-r) \|_{L^\infty(B(r))}	 \frac{1}{r} \int_{\mathbb{S}^2} \d \Omega \int_0^r \d r' (r')^2 | \D_l \phi |(-r',r',\omega) |\phi|(-r',r',\omega).
\end{align*}
Using H\"older's inequality with exponents $(3,2,6)$, one has
\begin{align*} \textcolor{blue}{I^\text{blue}_2} & \leq C \int_0^{r_0} \d r \, \| \D \phi (-r) \|_{L^\infty(B(r))} \frac{1}{r} \left( \int_0^r (r')^2 \, \d r' \right)^{1/3} \left(	 \int_{\mathbb{S}^2} \d \Omega \int_0^r \d r' \, (r')^2 | \D_l \phi|^2(-r',r',\omega) \right)^{1/2} \\
& \times \left( \int_{\mathbb{S}^2} \d \Omega \int_0^r \d r' \, (r')^2 |\phi|^6(-r',r',\omega) \right)^{1/6} \\
& \leq C \int_0^{r_0} \d r \, \| \D \phi(-r) \|_{L^\infty(B(r))} \| \phi \|_{L^6(K(r))} \| \D_l \phi \|_{L^2(K(r))}.
\end{align*}
Now $\| \D_l \phi \|_{L^2(K(r))} \leq C E^{1/2}_{\mathrm{loc}}$ is immediate by \eqref{energyidentitycone}, and since
\[ \frac{\d}{\d r'} ( \phi(-r',r',\omega) ) = (l^a \nabla_a \phi)(-r',r',\omega), \]
by the gauge-invariant Sobolev estimate of Jaffe--Taubes (see \S6 of \cite{JaffeTaubes1980}) one has
\[ \| \phi \|_{L^6(K(r))} \leq C \left( \| \D^\| \phi \|_{L^2(K(r))} + \| \phi \|_{L^2(K(r))} \right), \]
where $\D^\|  = (\D_l, \D_A)$. We show in the appendix that the $L^2$ norm of $\phi$ on the cone can be controlled by the local energy and the $L^2$ norm of $\phi$ at the base of the cone, $\| \phi \|_{L^2(K(r))} \leq 2 E^{1/2}_{\mathrm{loc}} t_0 + \| \phi \|_{L^2(B(r_0))}$. We thus conclude that 
\[ \textcolor{blue}{I^\text{blue}_2} \leq C E^{1/2}_{\mathrm{loc}} \left( 2 t_0 E^{1/2}_{\mathrm{loc}} + \| \phi \|_{L^2(B(r_0))} \right) \int_0^{t_0} \| \D \phi (-t) \|_{L^\infty(B(t))} \, \d t. \]

For the purple terms, we consider as an example the term
\[ \textcolor{purple}{I^\text{purple}} \defeq \int_{K(t_0)} (F_\nu^{\phantom{\nu}c} \D_c \phi) \, r \, \d r \, \d \Omega. \]
Expanding the product, we have
\[ F_\nu^{\phantom{\nu}c} \D_c \phi = - \frac{1}{2} F_{\nu l} \D_n \phi - \frac{1}{2} F_{\nu n} \D_l \phi + F_{\nu A} \D_A \phi, \]
so the last two terms can be estimated by
\begin{align*} & \int_0^{r_0} \d r \int_{\mathbb{S}^2} \d \Omega \, \| F(-r) \|_{L^\infty(B(r))} r | \D^\| \phi |(-r,r,\omega) \\ 
& \leq C \left( \int_0^{r_0} \d r \, \| F(-r) \|^2_{L^\infty(B(r))} \right)^{1/2} \left( \int_0^{r_0} \d r \int_{\mathbb{S}^2} \d \Omega \, r^2 | \D^\| \phi |^2(-r,r,\omega) \right)^{1/2} \\
& \leq C E^{1/2}_{\mathrm{loc}} \left( \int_0^{t_0} \| F(-t) \|^2_{L^\infty(B(t))} \, \d t \right)^{1/2}.
\end{align*}
To estimate the first term, we introduce the basis consisting of $e_0 = \partial_t$, $e_1 = \partial_r$, and $e_A$. One then has
\[ e_0 = \frac{1}{2} (n - l) \quad \text{and} \quad e_1 = \frac{1}{2} (n + l), \]
and that the Cartesian basis $\partial/\partial x^j$ for $\mathbb{R}^3$ is related to the basis $\{ e_1, e_A \}$ by an orthogonal transformation $O$,
\[ \frac{\partial}{\partial x^j} = O_{jk} e_k, \qquad e_k = O_{jk} \frac{\partial}{\partial x^j}. \]
If $\nu = t$, using $\partial_t = \frac{1}{2}(n-l)$ the first term then reads
\[ F_{t l} \D_n \phi = \frac{1}{2} F_{nl} \D_n \phi.  \]
One can thus estimate
\begin{align*} \int_{K(t_0)} r \, \d r \, \d \Omega \, |F_{tl} \D_n \phi | &\leq C \int_0^{r_0} \d r \int_{\mathbb{S}^2} \d \Omega \, \| \D \phi(-r) \|_{L^\infty(B(r))} r | F_{nl} |(-r,r,\omega) \\ 
& \leq C E^{1/2}_{\mathrm{loc}} \left( \int_0^{t_0} \| \D \phi(-t) \|^2_{L^\infty(B(t))} \, \d t \right)^{1/2}. 
\end{align*}
If, on the other hand, $\nu = i$, then
\[ F_{il} = O_{im} F_{e_m l} = O_{i1} F_{e_1 l} + O_{iA} F_{Al} = O_{i1} \frac{1}{2} F_{nl} + O_{iA} F_{Al},  \]
so a similar estimate can be deduced.

Finally, for the orange terms let us consider as an example the term
\[ \textcolor{orange}{I^\text{orange}} \defeq \int_{K(t_0)} \left( (\D_\nu \phi) \cdot \theta_\alpha \phi \right) (\theta_\alpha \phi) \, r \, \d r \, \d \Omega. \]
Applying Cauchy--Schwarz, we have
\begin{align*} \textcolor{orange}{I^\text{orange}} & \leq C \int_0^{r_0} \d r \int_{\mathbb{S}^2} \d \Omega \, r \, \| \D \phi (-r) \|_{L^\infty(B(r))} | \phi |^2(-r,r,\omega) \\
& \leq C \left( \int_0^{r_0} \d r \int_{\mathbb{S}^2} \d \Omega \, r^2 |\phi|^4(-r,r,\omega) \right)^{1/2} \left( \int_0^{r_0} \d r \, \| \D \phi (-r) \|^2_{L^\infty(B(r))} \right)^{1/2} \\
& \leq C \| \phi \|^2_{L^4(K(t_0))} \left( \int_0^{r_0} \| \D \phi(-r) \|^2_{L^\infty(B(r))} \right)^{1/2}.	
\end{align*}
By Gagliardo--Nirenberg interpolation and the Jaffe--Taubes invariance argument, we have
\[ \| \phi \|_{L^4(K(t_0))} \leq C \left( \| \D^\| \phi \|^{3/4}_{L^2(K(t_0))} \| \phi \|^{1/4}_{L^2(K(t_0))} + \| \phi \|_{L^2(K(t_0))} \right) \]
for some constant $C>0$, so it follows that $\| \phi \|^2_{L^4(K(t_0))}$ can be estimated by a polynomial (containing perhaps fractional positive powers) in $E_{\mathrm{loc}}$, $t_0$, and the $L^2$ norm of $\phi$ on the base of the cone $K(t_0)$.

Going back to \eqref{waveeqnF} and \eqref{waveeqnDphi}, altogether the above estimates imply the bounds
\begin{align*} & \| F(0) \|^2_{L^\infty(B(0))} \leq p_1(t_0) + q_1(t_0) \int_0^{t_0} \left( \| \D \phi(-t) \|^2_{L^\infty(B(t))} + \| F(-t) \|^2_{L^\infty(B(t))} \right) \d t, \\
& 	\| \D \phi(0) \|^2_{L^\infty(B(0))} \leq p_2(t_0) + q_2(t_0) \int_0^{t_0} \left( \| \D \phi(-t) \|^2_{L^\infty(B(t))} + \| F(-t) \|^2_{L^\infty(B(t))} \right) \d t, 
\end{align*}
where $p_{1,2}(t_0)$, $q_{1,2}(t_0)$ are positive polynomials in $t_0$ with coefficients depending only on $E_{\mathrm{loc}}$ and the temporal gauge initial data (including $\| \phi \|_{L^2(B(r_0))}(-t_0)$) on $\Sigma \cap \mathbf{K}(t_0)$. Translating the origin so that $p$ has coordinates $(t, 0)$, the lemma follows.
\end{proof}

Given the result of \Cref{lem:localizedestimates}, one now wishes to apply Gr\"onwall's lemma to deduce that the uniform norm $N$ does not blow up. Some care is required at this point, since the function $N(s)$ may not be continuous in $s$. Indeed, continuity may fail in the second variable of the function
\[ f(s_1, s_2) = \| F (s_1) \|_{L^\infty(B(t-s_2))} \]
if one considers a function $F(s_1)$ with multiple maxima in $\overline{B(t)}$. But to apply Gr\"onwall's lemma one only needs to show  that $ | N(s) | \, \d s$ defines a locally finite measure,
\[ \int_0^t | N(s) | \, \d s < \infty. \]
But this is clear, since by Sobolev embedding
\begin{align*} \| F(s) \|^2_{L^\infty(B(t-s))} + \| \D \phi(s) \|^2_{L^\infty(B(t-s))}  & \leq \| F(s) \|^2_{L^\infty(B(t))} + \| \D \phi(s) \|^2_{L^\infty(B(t))} \\
& \la \| F(s) \|^2_{H^2(B(t))} + \| \D \phi(s) \|^2_{H^2(B(t))} \\
& \in L^\infty_{\mathrm{loc}}(\mathbb{R}_s),
\end{align*}
where the last inclusion follows from the results of Goganov--Kapitanskii \cite{GoganovKapitanskii1985}, see e.g. Thereom 3 therein. We thus obtain
\[ N(t) < \infty \quad \forall t > 0. \]
The construction can be repeated for any point $p \in \mathbb{M}$, so we can package the above work into the following theorem.

\begin{theorem} \label{thm:localuniformestimates} Consider temporal gauge initial data $(\mathbf{A}, \mathbf{E}, \phi, \pi) \in ( H^2_{\mathrm{loc}}(\mathbb{R}^3) \times H^1_{\mathrm{loc}}(\mathbb{R}^3) )^2$ for the system \eqref{Minkowskifieldequations} satisfying the constraint \eqref{Minkowskiconstraint}. Then the fields $F$ and $\D \phi$ are $L^\infty_{\mathrm{loc}}(\mathbb{R} \times \mathbb{R}^3)$ in the domain of existence of the solution.
\end{theorem}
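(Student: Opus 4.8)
The strategy is to localize the entire Eardley--Moncrief blow-up argument to a single backward lightcone and then let the vertex range over spacetime. I would fix a point $p$ in the domain of existence and translate it to the spatial origin, so that the initial data sit on $\{ t = -t_0 \}$. Feeding the Cronstr\"om-gauge integral representations \eqref{lightconeintegralF}--\eqref{lightconeintegralDphi} into \Cref{lem:localizedestimates} produces the closed inequality
\[ N(t) \leq p(t) + q(t) \int_0^t N(s) \, \d s, \qquad N(s) = \| F(s) \|^2_{L^\infty(B(t-s))} + \| \D \phi(s) \|^2_{L^\infty(B(t-s))}, \]
with polynomial coefficients whose data-dependence is entirely controlled by the \emph{finite} quantities $E_{\mathrm{loc}}$, the $(H^2 \times H^1)^2$ norm of the temporal-gauge data on the base of the cone, and $\| \phi \|_{L^2}$ there. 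Granting that Gr\"onwall's inequality applies, this immediately yields $N(t) < \infty$ for every $t > 0$, i.e. $F$ and $\D \phi$ are bounded throughout the truncated cone.

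The step I expect to demand the most care is the application of Gr\"onwall itself, since $N$ need not be continuous in $s$: the spatial location of the supremum defining $\| F(s) \|_{L^\infty(B(t-s))}$ can jump discontinuously when $F(s)$ attains its maximum at several points of $\overline{B(t)}$. The way around this is that Gr\"onwall's lemma does not in fact require continuity, only that $N(s)\,\d s$ be a locally finite measure, i.e. that $N \in L^1_{\mathrm{loc}}$. I would establish this by domain monotonicity followed by Sobolev embedding: since $B(t-s) \subset B(t)$, one has the crude pointwise-in-$s$ bound
\[ N(s) \leq \| F(s) \|^2_{L^\infty(B(t))} + \| \D \phi(s) \|^2_{L^\infty(B(t))} \la \| F(s) \|^2_{H^2(B(t))} + \| \D \phi(s) \|^2_{H^2(B(t))}, \]
and the right-hand side is locally bounded in $s$ by the local-in-time $H^2 \times H^1$ well-posedness of Goganov--Kapitanskii \cite{GoganovKapitanskii1985}. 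Hence $N \in L^\infty_{\mathrm{loc}}(\mathbb{R}_s) \subset L^1_{\mathrm{loc}}$, and the generalized Gr\"onwall inequality is justified.

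Finally, since the vertex $p$ was an arbitrary point of the domain of existence, the conclusion $N(t) < \infty$ holds in a full neighbourhood of every spacetime point; repeating the construction over a covering by such cones shows that $F$ and $\D \phi$ are bounded on every compact subset, i.e. $F, \D \phi \in L^\infty_{\mathrm{loc}}(\mathbb{R} \times \mathbb{R}^3)$ throughout the domain of existence. It is worth stressing that the Goganov--Kapitanskii input is used only qualitatively, to know that a finite-regularity solution exists and that its $H^2 \times H^1$ norm does not instantaneously blow up; the quantitative control that precludes finite-time blow-up is supplied entirely by \Cref{lem:localizedestimates} together with the conserved local energy $E_{\mathrm{loc}}$.
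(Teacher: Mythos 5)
Your proposal reproduces the paper's own argument essentially verbatim: you invoke \Cref{lem:localizedestimates} to obtain the closed integral inequality for $N$, justify Gr\"onwall exactly as the paper does (noting the possible discontinuity of $N$ in $s$ and replacing continuity by local integrability via domain monotonicity, Sobolev embedding on $B(t)$, and the Goganov--Kapitanskii local well-posedness result), and then conclude by letting the vertex $p$ range over the domain of existence. This matches the paper's proof, so there is nothing to add.
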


\section{Gluing onto the Einstein Cylinder} \label{sec:gluing}

In this section we explain how the local uniform estimates on Minkowski space can be used to deduce global uniform estimates on the Einstein cylinder. It pays to state clearly what we shall be doing: we will prescribe initial data on the Einstein cylinder $\mathfrak{E}$, and consider a copy of Minkowski space $\mathbb{M}$ conformally embedded in $\mathfrak{E}$ in such a way that the initial surface in $\mathfrak{E}$ coincides with the initial surface in $\mathbb{M}$, as depicted in \cref{fig:Minkowskiembedding} below. Initial data on $\mathfrak{E}$ prescribed in this way will define initial data for the system on $\mathbb{M}$, however, because it will generically be non-zero all around the $3$-sphere, the corresponding data on $\mathbb{M}$ will have infinite energy. Nonetheless, it will be locally $(H^2 \times H^1)^2$, allowing us to deduce local $L^\infty$ estimates in $\mathbb{M}$ as per the previous chapter. We shall then transport these local estimates back to $\mathfrak{E}$, and patch them all the way around the $3$-sphere.

It is classical that Minkowski space $(\mathbb{M}, \eta = \d t^2 - \d r^2 - r^2 \mathfrak{s}_2)$ can be conformally embedded into the Einstein cylinder $(\mathfrak{E}, \mathfrak{e} = \d \tau^2 - \mathfrak{s}_3)$ using the conformal factor
\[ \Omega = 2 \cos ( \arctan(t-r) ) \cos (\arctan(t+r) ) = \frac{2}{ \sqrt{1+(t-r)^2} \sqrt{1+(t+r)^2} }.  \]
One has $\Omega^2 \eta = \mathfrak{e} = \d \tau^2 - \d \zeta^2 - (\sin^2 \zeta) \mathfrak{s}_2$, where the coordinates on the Einstein cylinder are related to the coordinates on Minkowski space by $\tau = \arctan(t-r) + \arctan(t+r)$, $\zeta = \arctan(t+r)- \arctan(t-r)$, and $\mathbb{M}$ is the subset of $\mathfrak{E} = \mathbb{R}_\tau \times \mathbb{S}^3$ given by 
\[ \mathbb{M} = \{ (\tau, \zeta) \, : \, |\tau| + \zeta < \pi, \, \zeta \geq 0 \} \times \mathbb{S}^2. \]
A picture of this embedding (for $t \geq 0$) is shown below.
\begin{figure}[thp]
	\centering
		\begin{minipage}{0.45\textwidth}
			\begin{tikzpicture}	
				\centering \hspace{-1.5cm}
				\node[inner sep=0pt] (embedding) at (4,0) 
					{\includegraphics[width=0.43\textwidth]{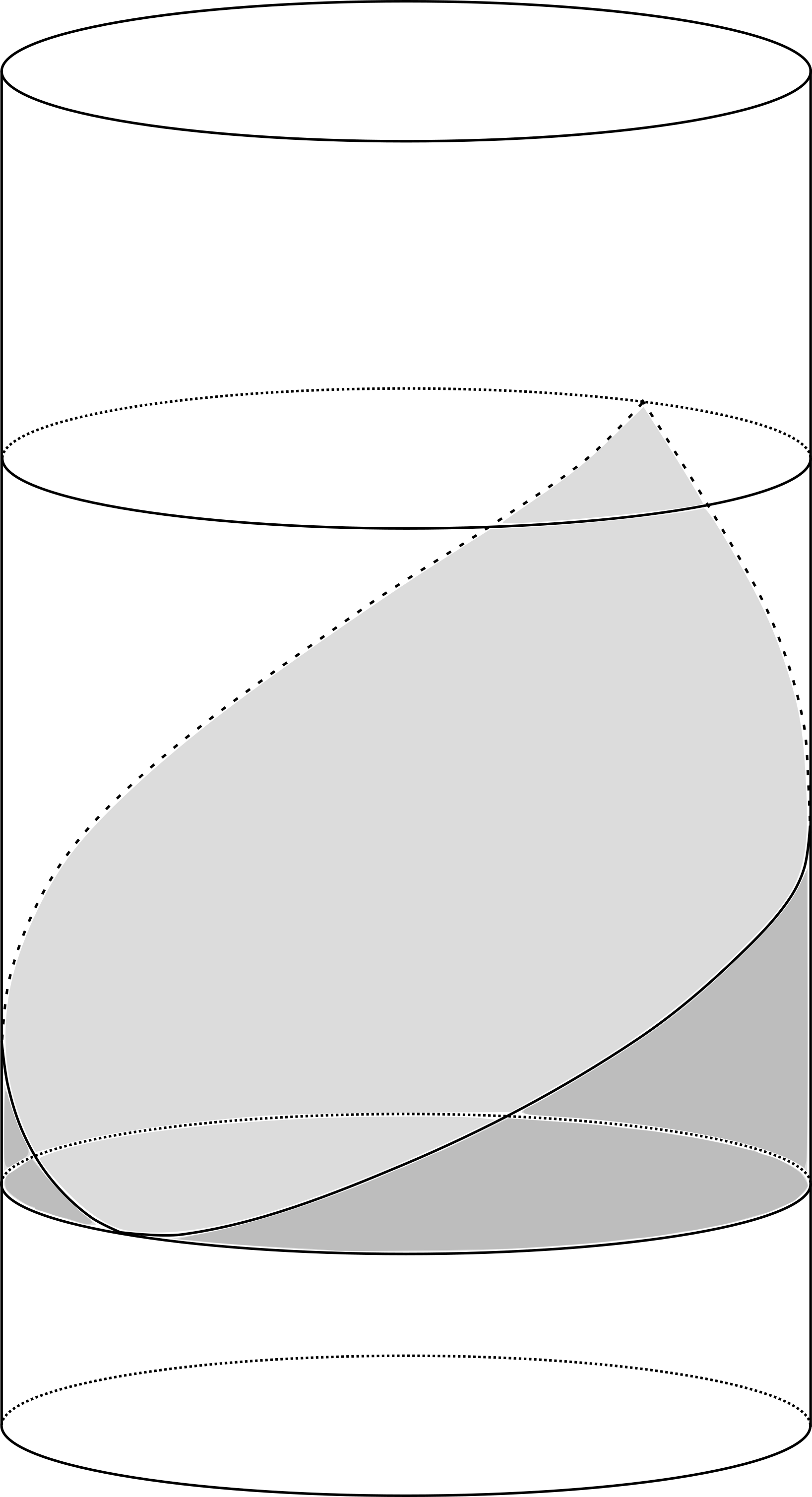}};
						\node[label={[shift={(6.5,-0.7)}]$\tau$}] {};
						\draw[->] (6.2,-1.55) .. controls (6.2,0) .. (6.2,.75);
			\end{tikzpicture}
		\end{minipage}
			\begin{minipage}{0.45\textwidth}
				\begin{tikzpicture}	
					\centering \hspace{-1.5cm}
					\node[inner sep=0pt] (embedding) at (4,0) 
						{\includegraphics[width=1.2\textwidth]{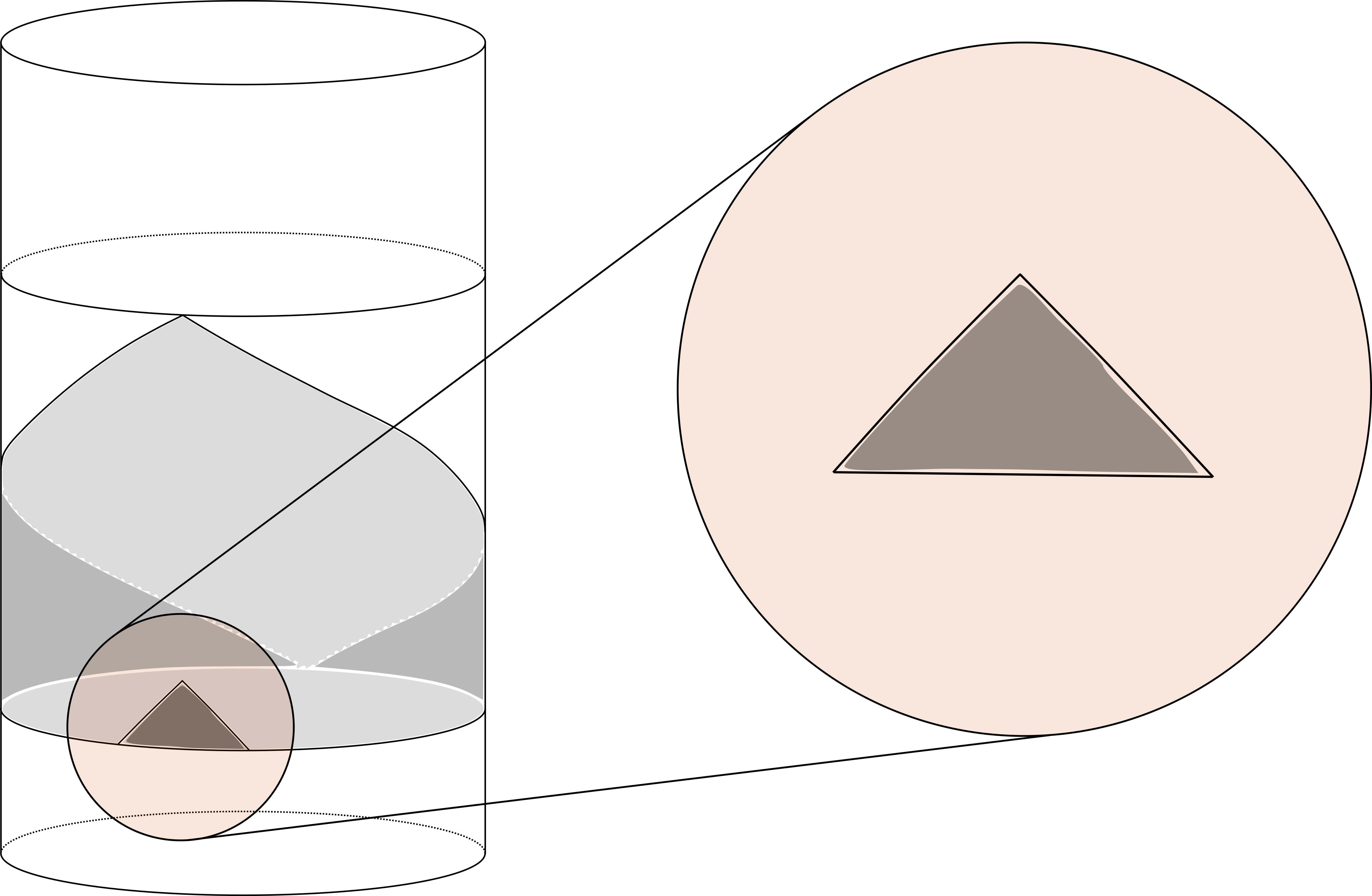}};
							\node[label={[shift={(6.15,0)}]$\hat{K}$}] {};
							\node[label={[shift={(6.15,-0.8)}]$O$}] {};
				\end{tikzpicture}
			\end{minipage}
	\label{fig:Minkowskiembedding}
	\caption{The embedding of $\mathbb{M}$ into $\mathfrak{E}$.}
\end{figure}

\noindent Instead of considering the whole of $\mathbb{M}$ embedded into $\mathfrak{E}$, we only consider the domain of dependence of a small ball in $\mathbb{M}$ glued onto $\mathfrak{E}$. Let $B(r_0)$ be the ball of radius $r_0$ centred at the origin $O \in \mathbb{M}$, and consider the cone $K = D^+(B(r_0))$. We consider the image $\hat{K}$ of $K$ under the embedding $\mathbb{M} \hookrightarrow \mathfrak{E}$; as conformal transformations preserve the causal structure, $\hat{K}$ is the domain of dependence of $\widehat{B(r_0)}$, where $\widehat{B(r_0)}$ is the image of $B(r_0)$ under the embedding.

\subsection{Conformal Transport of Estimates}

As already mentioned, it is classical that the weights 
\begin{equation} \label{conformalweights} A_a = \hat{A}_a \quad \text{and} \quad \phi = \Omega \hat{\phi}	
\end{equation}
leave the system \eqref{fieldequations} invariant under the conformal transformation $g_{ab} \leadsto \hat{g}_{ab} = \Omega^2 g_{ab}$. As a result, the fields $F_{ab}$ and $\D_a \phi$ transform according to $F_{ab} = \hat{F}_{ab}$ and $\hat{\D}_a \hat{\phi} = \Omega^{-1} ( \D_a \phi - \Upsilon_a \phi)$, where $\Upsilon_a = \partial_a \log \Omega$. Consider a cone $K$ with image $\hat{K}$ under the embedding $\mathbb{M} \hookrightarrow \mathfrak{E}$, as described above. It is clear that $ 0 < C_1 \leq | \Omega | \leq C_2 < \infty$ in $K$, so immediately $ \| \phi \|_{L^\infty(K)} \simeq \| \hat{\phi} \|_{L^\infty(\hat{K})}$. Indeed, for example
\[ | \Omega^{-1} | \leq \frac{1}{2} \left| \sqrt{1+ (t-r)^2} \sqrt{1+(t+r)^2} \right| \leq \frac{1}{2} ( 1 + 4 r_0^2), \]
and \[ | \Omega | \leq 2. \]
To deduce the same type of equivalence for tensor fields, one needs to check that the norms defined by the Riemannian metrics
\[ \Gamma^{ab} = 2 T^a T^b - \eta_{ab} \quad \text{and} \quad \hat{\Gamma}^{ab} = 2 \hat{T}^a \hat{T}^b - \mathfrak{e}^{ab}, \]
where $T^a = \partial_t$ and $\hat{T}^a = \partial_\tau$, are equivalent, at least in $K$.

\begin{proposition} For any $1$-form $X_a$ one has $| X |_{\Gamma} \simeq | X |_{\hat{\Gamma}}$ in $K$.	
\end{proposition}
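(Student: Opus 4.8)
The plan is to reduce the statement to a uniform eigenvalue bound for a single matrix field. In Cartesian coordinates on $\mathbb{M}$ one has $T^a = \partial_t = (1,0,0,0)$ and $\eta^{ab} = \mathrm{diag}(1,-1,-1,-1)$, so $\Gamma^{ab} = 2T^aT^b - \eta^{ab} = \delta^{ab}$ is simply the Euclidean metric; consequently $|X|_\Gamma$ is nothing but the Euclidean norm of the component vector $(X_a)$. The assertion $|X|_\Gamma \simeq |X|_{\hat{\Gamma}}$ is therefore equivalent to showing that the symmetric matrix field $\hat{\Gamma}^{ab}$ has all eigenvalues bounded above and below by positive constants, uniformly over $K$.

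To expose the structure, I would first record the algebraic relation between the two metrics. Since $\mathfrak{e}_{ab} = \Omega^2 \eta_{ab}$ gives $\mathfrak{e}^{ab} = \Omega^{-2}\eta^{ab}$, and $\eta^{ab} = 2T^aT^b - \Gamma^{ab}$, the definition $\hat{\Gamma}^{ab} = 2\hat{T}^a\hat{T}^b - \mathfrak{e}^{ab}$ yields
\[ \hat{\Gamma}^{ab} = \Omega^{-2}\Gamma^{ab} + 2\big( \hat{T}^a \hat{T}^b - \Omega^{-2} T^a T^b \big), \qquad \hat{T}^a = \partial_\tau. \]
Contracting with $X_a X_b$, this reads $|X|^2_{\hat{\Gamma}} = \Omega^{-2}|X|^2_\Gamma + 2\big( (\hat{T}^a X_a)^2 - \Omega^{-2}(T^a X_a)^2 \big)$. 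The first term is the naive conformal rescaling one expects; the correction measures the tilt of $\partial_\tau$ relative to $\partial_t$, which is genuinely nonzero away from the axis $r=0$ and is precisely what prevents the equivalence from being a one-line consequence of the bounds on $\Omega$.

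For the upper bound I would discard the negative term and estimate the rest crudely. Using $\Omega \geq C_1 > 0$ from the preceding discussion and Cauchy--Schwarz with respect to $\Gamma$, $(\hat{T}^a X_a)^2 \leq |\hat{T}|^2_{\mathrm{Euc}} |X|^2_\Gamma$, where $|\hat{T}|_{\mathrm{Euc}}$ is the Euclidean norm of the Cartesian components of $\partial_\tau$. The latter is bounded on $K$: writing $u = t-r$, $v=t+r$ one computes $\partial_\tau = \tfrac14\big[(1+u^2)+(1+v^2)\big]\partial_t + \tfrac14\big[(1+v^2)-(1+u^2)\big]\partial_r$, whose coefficients are manifestly bounded on the compact closure $\overline{K}$ (where $|u|,|v| \leq r_0$). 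This produces $|X|_{\hat{\Gamma}} \leq C|X|_\Gamma$ with an explicit $C$ depending only on $r_0$.

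The lower bound is the main obstacle, since the correction term is indefinite and cannot simply be dropped. Here I would invoke compactness: $\hat{\Gamma}^{ab}$ is a smooth symmetric matrix field that is positive definite on all of $K$ --- positive definiteness holding because $\hat{T} = \partial_\tau$ is $\mathfrak{e}$-unit timelike, so $\hat{\Gamma}$ is a bona fide Riemannian metric --- and it extends smoothly and positive-definitely to the compact set $\overline{K}$, the conformal factor $\Omega$ being smooth and bounded away from $0$ and $\infty$ there. Hence the smallest eigenvalue of $\hat{\Gamma}^{ab}$ attains a strictly positive minimum over $\overline{K}$, giving $|X|^2_{\hat{\Gamma}} \geq c|X|^2_\Gamma$. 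Equivalently, one may run the symmetric identity $\Gamma^{ab} = \Omega^2 \hat{\Gamma}^{ab} + 2\big(T^aT^b - \Omega^2 \hat{T}^a \hat{T}^b\big)$ with the roles of the two metrics reversed, now using $\Omega \leq C_2$; in either formulation it is the compactness of $\overline{K}$ together with the strict positivity of both metrics that supplies the uniform constants.
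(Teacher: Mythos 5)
Your proof is correct, but it follows a genuinely different route from the paper's, most notably in the lower bound. The paper works entirely in the cylinder frame: it expresses $T^a = \tfrac14 \Omega^2\bigl( (2+u^2+v^2)\hat{T}^a + (u^2-v^2)\hat{Z}^a \bigr)$ (the mirror image of your formula for $\partial_\tau$), expands $\Omega^{-2}|X|^2_\Gamma$ into squares and a cross term in $\hat{T}^a X_a$ and $\hat{Z}^a X_a$, and obtains \emph{both} directions by explicit pointwise algebra --- the delicate direction being handled by absorbing the indefinite cross term with Young's inequality, using $\Omega^2/4 \leq 1$, which yields an explicit factor $\tfrac18 \Omega^2$ and never appeals to compactness. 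You instead reduce to uniform eigenvalue bounds via the identity $\hat{\Gamma}^{ab} = \Omega^{-2}\Gamma^{ab} + 2\bigl( \hat{T}^a \hat{T}^b - \Omega^{-2} T^a T^b \bigr)$: one direction you get explicitly (drop the negative term, Cauchy--Schwarz, boundedness of the components of $\partial_\tau$ on $\overline{K}$), and the other you get softly, from the fact that a continuous positive-definite metric on the compact set $\overline{K}$ has smallest eigenvalue bounded away from zero. Notice the crossover: the direction that costs the paper Young's inequality is the one you get almost for free, and vice versa, because the two decompositions place the indefinite correction on opposite sides of the estimate. What the paper's argument buys is quantitative, pointwise control --- its constants are explicit powers of $\Omega$, so the equivalence persists on any region where $\Omega$ is bounded above and below, compact or not; what yours buys is brevity and robustness, since it applies verbatim to any two continuous Riemannian metrics on a compact region. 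Your closing remark in fact contains the fully explicit alternative: in the reversed identity $\Gamma^{ab} = \Omega^2 \hat{\Gamma}^{ab} + 2\bigl( T^a T^b - \Omega^2 \hat{T}^a \hat{T}^b \bigr)$ one may again drop the negative term and apply Cauchy--Schwarz with respect to $\hat{\Gamma}$ (the components of $\partial_t$ in the cylinder frame being bounded on $K$ by the paper's formula above), which removes the need for compactness altogether and restores explicit constants.
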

\begin{proof} By a direct calculation using the chain rule, one finds
\[ T^a = \frac{1}{4} \Omega^2 \left( (2 + u^2 + v^2) \hat{T}^a + (u^2 - v^2) \hat{Z}^a \right), \]
where $\hat{Z}^a = \partial_\zeta$, $u = t- r$, and $v=t+r$. A further calculation then shows that
\begin{align*} \Omega^{-2} |X|^2_{\Gamma} &= \frac{1}{8} \Omega^2 \left( (2+u^2+v^2)^2 - 1 \right) (\hat{T}^a X_a)^2 + \frac{1}{4}  \Omega^2 ( 2 + u^2 + v^2)(u^2 - v^2) (\hat{T}^a X_a)(\hat{Z}^a X_a) \\
& + (u^2 - v^2)^2 (\hat{Z}^a X_a)^2 + |X|^2_{\mathfrak{s}_3}	.
\end{align*}
It is clear that $|X|^2_{\Gamma} \la |X|^2_{\hat{\Gamma}}$, while for the lower bound it is enough to observe that
\[ (2+u^2+v^2)(u^2 - v^2)(\hat{T}^a X_a)(\hat{Z}^a X_a) \geq - \frac{1}{4} ( 2+ u^2 +v^2)^2 (\hat{T}^a X_a)^2  - (u^2-v^2)^2 (\hat{Z}^a X_a)^2, \]
so that
\begin{align*} \Omega^{-2} | X |^2_{\Gamma} & \geq \frac{1}{8} \Omega^2 \left( \frac{1}{2} (2+u^2+v^2)^2 - 1 \right) (\hat{T}^a X_a)^2 \\
& + \left((u^2-v^2)^2 \left( 1 - \frac{\Omega^2}{4} \right) \right) (\hat{Z}^a X_a)^2 + |X|^2_{\mathfrak{s}_3} \\
& \geq \frac{1}{8} \Omega^2 ( \hat{T}^a X_a)^2 + |X|^2_{\mathfrak{s}_3} \\
& \geq \frac{1}{8} \Omega^2 | X |^2_{\hat{\Gamma}},
\end{align*}
as $\Omega^2/4 \leq 1$. \end{proof}

\noindent It follows that
\[ \| F \|_{L^\infty(K)} \simeq \| \hat{F} \|_{L^\infty(\hat{K})} \quad \text{and} \quad \| \hat{\D} \hat{\phi} \|_{L^\infty(\hat{K})} \la \| \D \phi \|_{L^\infty(K)} + \| \Upsilon \phi \|_{L^\infty(K)}. \]
Note that these are gauge-independent. This demonstrates that local $L^\infty$ estimates on Minkowski space imply local $L^\infty$ estimates on the Einstein cylinder. We show below how initial data on the Einstein cylinder defines initial data on Minkowski space, and use this to complete our construction.

Consider temporal gauge (with respect to $\partial_\tau$) initial data $(\hat{\mathbf{A}}, \hat{\mathbf{E}}, \hat{\phi}, \hat{\pi}) \in (H^2(\mathbb{S}^3) \times H^1(\mathbb{S}^3))^2$ for the Yang--Mills--Higgs equations on $\mathfrak{E}$,
\begin{equation} \label{fieldequationscylindertemporalgauge} \dot{\hat{\mathbf{E}}}_i + \slashgrad^j \hat{F}_{ij} + [ \hat{\mathbf{A}}^j, \hat{F}_{ij} ] = ( ( \hat{\boldsymbol{\D}}_i \hat{\phi} ) \cdot \theta_\alpha \hat{\phi} ) \theta_\alpha, \qquad \dot{\hat{\pi}} - \hat{\boldsymbol{\D}}^j \hat{\boldsymbol{\D}}_j \hat{\phi} + \hat{\phi} + \lambda |\hat{\phi}|^2 \hat{\phi} = 0,
\end{equation}
satisfying the constraint
\begin{equation} \label{cylinderconstraint} \slashgrad^j \hat{\mathbf{E}}_j + [ \hat{\mathbf{A}}^j, \hat{\mathbf{E}}_j ] = (\hat{\pi} \cdot \theta_\alpha \hat{\phi} ) \theta_\alpha. \end{equation}
Since $\hat{T}^a = \partial_\tau$ is not everywhere parallel to $T^a = \partial_t$, the temporal gauge on $\mathfrak{E}$ is of course not the same as the temporal gauge on $\mathbb{M}$. However, $\hat{T}^a$ and $T^a$ \emph{are} parallel on the initial surface $\Sigma_0 = \{ \tau = 0 \} = \{ t = 0 \}$,
\[ \left.r_+^2 \frac{\partial}{\partial t}\right|_{t=0} = \left. \frac{\partial}{\partial \tau}\right|_{\tau = 0}, \]
where $r_+^2 = (1+r^2)/2$. Thus on the initial surface $\Sigma_0$ one has $A_0 = 0 ~ \text{a.e.} \iff \hat{A}_0 = 0 ~ \text{a.e.}$. The data $(\hat{\mathbf{A}}, \hat{\mathbf{E}}) \in H^2(\mathbb{S}^3) \times H^1(\mathbb{S}^3)$ then gives rise to temporal gauge initial data $(\mathbf{A}, \mathbf{E}) \in H^2_{\mathrm{loc}}(\mathbb{R}^3) \times H^1_{\mathrm{loc}}(\mathbb{R}^3)$ on Minkowski space: one has
\[ \left. \hat{\mathbf{A}}_a \right|_{\tau = 0} = \mathbf{A}_a \Big|_{t=0} \quad \text{and} \quad \hat{\mathbf{E}}_a \Big|_{\tau = 0} = r_+^2 \mathbf{E}_a \Big|_{t=0}. \]
For the scalar field part, one similarly has
\[ \hat{\phi} \Big|_{\tau = 0} = r_+^2 \phi \Big|_{t=0}, \]
and (since $(\partial_t \Omega)|_{t=0} = 0$),
\[ (\partial_\tau \hat{\phi} ) \Big|_{\tau = 0} = (\Omega^{-1} \partial_t \hat{\phi} ) \Big|_{t=0} = (\Omega^{-2} \partial_t \phi) \Big|_{t=0} = r^4_+ (\partial_t \phi) \Big|_{t=0}, \]
i.e.
\[ \hat{\pi} \Big|_{\tau = 0} = r^4_+ \pi \Big|_{t=0}. \]
Thus $(\hat{\phi}, \hat{\pi}) \in H^2(\mathbb{S}^3) \times H^1(\mathbb{S}^3)$ similarly gives rise to temporal gauge initial data $(\phi, \pi) \in H^2_{\mathrm{loc}}(\mathbb{R}^3) \times H^1_{\mathrm{loc}}(\mathbb{R}^3)$. Furthermore, that the Minkowskian initial data satisfies the constraint equation \eqref{Minkowskiconstraint} as a consequence of the constraint equation \eqref{cylinderconstraint} on the Einstein cylinder follows from the conformal invariance of the field equations and the fact that $\partial_t$ and $\partial_\tau$ are parallel initially. In summary, $(H^2 \times H^1)^2$ temporal gauge initial data on $\mathfrak{E}$ gives rise to $(H^2_{\mathrm{loc}} \times H^1_{\mathrm{loc}})^2$ temporal gauge initial data on $\mathbb{M}$.

\begin{remark} \label{rmk:infiniteenergy} The locality is necessary. Indeed, the measures on $\{ t = 0\}$ and $\{ \tau = 0 \}$ are related by
\[ \dvol_{\mathfrak{s}_3} = r_+^{-6} \dvol_{\mathbb{R}^3}, \]
so the $L^2$ norms of the initial data scale as
\begin{align*} & \int_{\mathbb{S}^3} | \hat{\phi} |^2 \dvol_{\mathfrak{s}_3} = \int_{\mathbb{R}^3} \frac{1}{r^2_+} |\phi|^2 \, \dvol_{\mathbb{R}^3}, \qquad \int_{\mathbb{S}^3} | \hat{\mathbf{A}} |^2 \dvol_{\mathfrak{s}_3} = \int_{\mathbb{R}^3} \frac{1}{r^2_+} | \mathbf{A} |^2 \dvol_{\mathbb{R}^3}, \\
& \int_{\mathbb{S}^3} | \hat{\pi} |^2 \dvol_{\mathbb{S}^3} = \int_{\mathbb{R}^3} r^2_+ | \pi |^2 \dvol_{\mathbb{R}^3}, \qquad \int_{\mathbb{S}^3} | \hat{\mathbf{E}} |^2 \dvol_{\mathbb{S}^3} = \int_{\mathbb{R}^3} r^2_+ |\mathbf{E}|^2 \dvol_{\mathbb{R}^3},
\end{align*}
where $|\hat{\mathbf{A}}|^2$ and $|\hat{\mathbf{E}}|^2$ are computed with respect to the metric on $\mathbb{S}^3$, while $|\mathbf{A}|^2$ and $|\mathbf{E}|^2$ are computed with respect to the metric on $\mathbb{R}^3$ as appropriate. One sees that finite energy on $\mathfrak{E}$ does not imply finite energy on $\mathbb{R}^3$, and allows $|\mathbf{A}|, \phi \sim r^{-1}$ tails, for example.
\end{remark}

Consider any local solution $(\hat{A}_a, \hat{\phi})$ on $\mathfrak{E}$ with $(H^2(\mathbb{S}^3) \times H^1(\mathbb{S}^3))^2$ initial data. Then the conformally related fields $(A_a, \phi) = (\hat{A}_a, \Omega \hat{\phi})$ restricted to $\mathbb{M}$ are a solution to the Yang--Mills--Higgs equations on $\mathbb{M}$ with $(H^2_{\mathrm{loc}} \times H^1_{\mathrm{loc}})^2$ initial data, so by the local $L^\infty$ estimates of \Cref{sec:localization} satisfy 
\[ \| F \|_{L^\infty(K)} + \| \D \phi \|_{L^\infty(K)} < \infty. \]
To show that this implies 
\[ \| \hat{F} \|_{L^\infty(\hat{K})} + \| \hat{\D} \hat{\phi} \|_{L^\infty(\hat{K})} < \infty, \]
it only remains to check that $\| \Upsilon \phi \|_{L^\infty}$ is bounded in $K$. We have $\| \Upsilon \phi \|_{L^\infty(K)} \leq \| \Upsilon \|_{L^\infty(K)} \| \phi \|_{L^\infty(K)}$, and can estimate $\Upsilon_a = \partial_a \log \Omega$ easily by, for example,
\[ | \Upsilon_t | \leq \left| \frac{(t-r)}{1 + (t-r)^2} + \frac{(t+r)}{1+(t+r)^2} \right| \leq 2 t \leq 2 r_0, \] 
for the $\Upsilon_t$ component, and similarly for the $ \Upsilon_r $ component. To estimate $\| \phi \|_{L^\infty}$, we make use of the temporal gauge condition on $\mathbb{M}$,
\[ \phi(t) = \phi(0) + \int_0^t \pi(s) \, \d s, \]
so that 
\begin{align*} \| \phi \|_{L^\infty(K)} &\leq \| \phi(0) \|_{L^\infty(B(r_0))} + t \| \pi \|_{L^\infty(K)} \\
& \leq \| \phi(0) \|_{H^2(B(r_0))} + r_0 \| \pi \|_{L^\infty(K)} \\
& < \infty.
\end{align*}
Since the $\| \phi \|_{L^\infty}$ norm is gauge-independent, this does not present any issues with respect to gauge choice. Thus $\| \Upsilon \phi \|_{L^\infty(K)} < \infty$, and we have
\[ \| \hat{F} \|_{L^\infty(\hat{K})} + \| \hat{\D} \hat{\phi} \|_{L^\infty(\hat{K})} < \infty. \]
Since the position of the cone $\hat{K}$ on the Einstein cylinder was arbitrary (inasmuch as the position of the embedded copy of Minkowski space was arbitrary in $\mathfrak{E}$), we have proven the following.

\begin{theorem} \label{thm:Linftyestimatesstrip} For given temporal gauge initial data $(\hat{\mathbf{A}}, \hat{\mathbf{E}}, \hat{\phi}, \hat{\pi}) \in (H^2(\mathbb{S}^3) \times H^1(\mathbb{S}^3))^2$ for the system \eqref{fieldequationscylindertemporalgauge} satisfying the constraint \eqref{cylinderconstraint}, the fields $\hat{F}$ and $\hat{\D} \hat{\phi}$ are $L^\infty([0,\tau_0] \times \mathbb{S}^3)$ for some $\tau_0$ independent of the size of the initial data.
\end{theorem}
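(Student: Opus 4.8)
The single-cone transfer needed for this theorem has already been carried out in the preceding discussion: fixing a radius $r_0 > 0$ and the cone $K = D^+(B(r_0)) \subset \mathbb{M}$ with image $\hat K$ under the embedding $\mathbb{M} \hookrightarrow \mathfrak{E}$, the data-conversion at $\{\tau = 0\} = \{t=0\}$ turns $(H^2(\mathbb{S}^3) \times H^1(\mathbb{S}^3))^2$ data on $\mathfrak{E}$ into $(H^2_{\mathrm{loc}}(\mathbb{R}^3) \times H^1_{\mathrm{loc}}(\mathbb{R}^3))^2$ data on $\mathbb{M}$, so that \Cref{thm:localuniformestimates}, together with the conformal equivalence $\|F\|_{L^\infty(K)} \simeq \|\hat F\|_{L^\infty(\hat K)}$ and the bound on $\|\Upsilon\phi\|_{L^\infty(K)}$, already yields $\|\hat F\|_{L^\infty(\hat K)} + \|\hat{\D}\hat\phi\|_{L^\infty(\hat K)} < \infty$. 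The plan is therefore to patch finitely many such cones, based at different points of $\mathbb{S}^3$, so that their union contains a full strip $[0,\tau_0] \times \mathbb{S}^3$, and to show that the resulting $\tau_0$ is fixed by the geometry alone.

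First I would record the geometry of a single embedded cone. Under the embedding the initial ball $B(r_0) \subset \{t=0\}$ maps to a geodesic ball $\widehat{B(r_0)} = B_{\mathbb{S}^3}(\rho_0) \subset \{\tau=0\}$ of radius $\rho_0 = 2\arctan r_0$ (since $\zeta = 2\arctan r$ at $t=0$), and because conformal maps preserve the causal structure, $\hat K = D^+(\widehat{B(r_0)})$ consists of those $(\tau,y)$ with $d_{\mathbb{S}^3}(y,x) + \tau \leq \rho_0$, $x$ the centre; in particular $\hat K$ reaches height $\tau = \rho_0$. This is the step that makes $\tau_0$ data-independent: $\mathfrak{E}$ is homogeneous—$\mathrm{SO}(4)$ acts transitively on $\mathbb{S}^3$ and preserves the slicing $\{\tau = \text{const.}\}$—so the embedded Minkowski copy, and hence the cone, based at any other point of $\mathbb{S}^3$ is the isometric image of this one and reaches the same height $\rho_0$. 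Since $r_0$, and therefore $\rho_0$, is a fixed geometric choice made before any data is prescribed, and since the local $L^\infty$ estimate guarantees \emph{finiteness} in each cone for arbitrary data—the data affecting only the magnitude of the bound, not the height $\rho_0$ up to which it holds—the construction never sees the size of the initial data.

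Next I would cover $\mathbb{S}^3$ by finitely many such balls. By compactness there are points $x_1,\dots,x_M \in \mathbb{S}^3$ with $\bigcup_i B_{\mathbb{S}^3}(x_i,\rho_0) = \mathbb{S}^3$, where $M$ depends only on $\rho_0$ and the geometry of $\mathbb{S}^3$; by continuity of $y \mapsto \max_i(\rho_0 - d_{\mathbb{S}^3}(y,x_i))$ on the compact $\mathbb{S}^3$ there is a $\delta > 0$ such that the balls of radius $\rho_0 - \delta$ still cover. Then for every $(\tau,y) \in [0,\delta]\times\mathbb{S}^3$ there is an $i$ with $d_{\mathbb{S}^3}(y,x_i) \leq \rho_0 - \delta \leq \rho_0 - \tau$, whence $(\tau,y) \in \hat K_i$, so that $\bigcup_{i=1}^M \hat K_i \supseteq [0,\tau_0]\times\mathbb{S}^3$ with $\tau_0 \defeq \delta$. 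Applying the single-cone transfer to each $\hat K_i$ gives $\|\hat F\|_{L^\infty(\hat K_i)} + \|\hat{\D}\hat\phi\|_{L^\infty(\hat K_i)} < \infty$, and taking the maximum over the finite collection yields the stated bound on the strip.

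The main obstacle I anticipate is not the covering but the interplay between existence and the estimate: \Cref{thm:localuniformestimates} supplies the bound only within the domain of existence, so a priori one obtains finiteness only on the part of $[0,\tau_0]\times\mathbb{S}^3$ reached by the short-time solution of Choquet-Bruhat and Christodoulou. To promote this to the full strip one must run a bootstrap, using the a priori finiteness of the $L^\infty$ norm up to the \emph{geometric} height $\tau_0$ to control the $(H^2(\mathbb{S}^3)\times H^1(\mathbb{S}^3))^2$ norm and thereby prevent the solution from breaking down before $\tau_0$. Making this continuation rigorous—and thus guaranteeing that $\hat F$ and $\hat{\D}\hat\phi$ are genuinely defined and bounded on all of $[0,\tau_0]\times\mathbb{S}^3$—is the delicate point, and is precisely what enables the global argument of the following section, where one re-applies the estimate on $[\tau_0,2\tau_0]\times\mathbb{S}^3$ and iterates in uniform time-steps.
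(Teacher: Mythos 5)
Your cone-patching construction is exactly the paper's argument: the paper compresses it into the single remark that the position of the embedded copy of $\mathbb{M}$, and hence of the cone $\hat{K}$, in $\mathfrak{E}$ is arbitrary, whereas you spell out the geodesic-ball geometry ($\rho_0 = 2\arctan r_0$), the $\mathrm{SO}(4)$-homogeneity, and the finite covering giving $\tau_0 = \delta$. That elaboration is correct and is the intended reading.

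The problem is your final paragraph, which leaves the proof incomplete by its own admission and misidentifies how the ``delicate point'' is closed. No bootstrap against the Choquet-Bruhat--Christodoulou solution is needed, and none is performed in the paper. The Minkowski-space input behind \Cref{thm:localuniformestimates} is Goganov--Kapitanskii's global unique solvability \cite{GoganovKapitanskii1985} for \emph{locally} $H^2 \times H^1$ data, which in particular gives well-posedness in lightcones: the solution in each cone $K_i = D^+(B(r_0))$ exists all the way to the tip, for data of arbitrary size, and is determined solely by the data on the base $B(r_0)$. The fields on $\hat{K}_i$ are then \emph{defined} by conformally transporting this cone solution; they coincide (up to gauge, hence in the gauge-invariant norms $\|\hat{F}\|_{L^\infty}$, $\|\hat{\D}\hat{\phi}\|_{L^\infty}$) with the short-time cylinder solution wherever the latter exists, and coincide on overlaps $\hat{K}_i \cap \hat{K}_j$ by uniqueness and the domain-of-dependence property. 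So $\hat{F}$ and $\hat{\D}\hat{\phi}$ are defined and bounded on the whole strip $[0,\tau_0]\times\mathbb{S}^3$ with no reference whatsoever to the CBC existence time $\epsilon$. Indeed your own first paragraph already asserted finiteness on all of $\hat{K}$ --- that assertion is justified precisely by Goganov--Kapitanskii, and the last paragraph then wrongly retracts it.

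This separation is not cosmetic: in the proof of \Cref{thm:largedataexistence} the present theorem is applied to data at $\tau = \epsilon_{\mathrm{max}} - \tau_0/2$ to produce $L^\infty$ bounds up to $\epsilon_{\mathrm{max}} + \tau_0/2$, i.e.\ strictly \emph{beyond} the existence time of the cylinder solution, and it is exactly this overshoot that yields the contradiction with maximality. A version of the theorem whose conclusion is confined to the CBC domain of existence --- which is all your proposal establishes before the unexecuted bootstrap --- could not be used that way; carrying out the bootstrap you sketch would amount to folding the entire global-existence argument of \Cref{sec:globalexistence} into this theorem, rather than proving the a priori statement actually claimed here.
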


\section{Global Existence on the Einstein Cylinder} \label{sec:globalexistence}

\subsection{Local Existence \'a la Choquet-Bruhat and Christodoulou}

\begin{theorem}[Choquet-Bruhat and Christodoulou, 1981, \cite{ChoquetBruhatChristodoulou1981}] \label{thm:ChoquetBruhatChristodouloulocalexistence} Let $(\hat{\mathbf{a}}, \hat{\mathbf{e}}, \hat{\phi}_0, \hat{\phi}_1) \in (H^{s}(\mathbb{S}^3) \times H^{s-1}(\mathbb{S}^3))^2$ and $\hat{a}_0 \in H^s(\mathbb{S}^3)$, $s \geq 2$, be initial data for the Yang--Mills--Higgs equations
\begin{equation} \label{YMHcylinder} \hat{\D}^b \hat{F}_{ab} = - ((\hat{\D}_a \hat{\phi}) \cdot \theta_\alpha \hat{\phi}) \theta_\alpha, \qquad \hat{\D}^a \hat{\D}_a \hat{\phi} + \hat{\phi} + \lambda | \hat{\phi} |^2 \hat{\phi} = 0 \end{equation}
on $\mathfrak{E}$ satisfying the constraint
\begin{equation} \label{YMHconstraint} \slashgrad^j \hat{\mathbf{e}}_j + [ \hat{\mathbf{a}}^j, \hat{\mathbf{e}}_j ] = (\hat{\pi} \cdot \theta_\alpha \hat{\phi}_0 ) \theta_\alpha,	
\end{equation}
where $\hat{\pi} = \hat{\phi}_1 + \hat{a}_0 \hat{\phi}_0$. Then there exists $\epsilon > 0$ such that there exists a solution
\[ \hat{A}_a, \hat{\phi} \in E_s((-\epsilon,\epsilon) \times \mathbb{S}^3) \defeq \bigcap_{k=0}^s C^k((-\epsilon, \epsilon); H^{s-k}(\mathbb{S}^3)) \]
to \eqref{YMHcylinder} in Lorenz gauge $\hat{\nabla}_a \hat{A}^a = 0$, with
\[ \hat{\mathbf{A}}\Big|_{\tau = 0} = \hat{\mathbf{a}}, \quad \hat{A}_0 \Big|_{\tau = 0 } = \hat{a}_0, \quad \hat{\mathbf{E}} \Big|_{\tau=0} = \hat{\mathbf{e}}, \quad \hat{\phi} \Big|_{\tau=0} = \hat{\phi}_0, \quad \dot{\hat{\phi}} \Big|_{\tau = 0} = \hat{\phi}_1. \]
The largest such number $\epsilon$ depends continuously on the size $M$ of the data, where
\[ M = \| \hat{\phi}_0 \|_{H^s} + \| \hat{\mathbf{a}} \|_{H^s} + \| \hat{\phi}_1 \|_{H^{s-1}} + \| \hat{\mathbf{e}} \|_{H^{s-1}} + \| \hat{a}_0 \|_{H^s}, \]
and tends to infinity as $M$ tends to zero. Furthermore, the solution is unique\footnote{It is worth recalling here that we work with a \emph{compact} connected gauge group $\mathrm{G}$.} up to gauge transformations preserving the Lorenz gauge.
\end{theorem}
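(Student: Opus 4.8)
The plan is to follow the classical Leray-type reduction to a hyperbolic system, solve the reduced system by iteration on the compact slices $\mathbb{S}^3$, and then recover the full gauge-invariant system by propagating the constraint. First I would impose the Lorenz gauge $\hat{\nabla}_a \hat{A}^a = 0$ directly in \eqref{YMHcylinder}. Writing $\hat{F}_{ab} = \hat{\nabla}_a \hat{A}_b - \hat{\nabla}_b \hat{A}_a + [\hat{A}_a, \hat{A}_b]$ and commuting derivatives, the problematic second divergence $\hat{\nabla}^b \hat{\nabla}_a \hat{A}_b$ produces $\hat{\nabla}_a(\hat{\nabla}^b \hat{A}_b)$ — which vanishes in Lorenz gauge — plus a Ricci contribution $\hat{R}_{ab}\hat{A}^b$, constant on $\mathfrak{E}$. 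The equations then reduce to a coupled semilinear wave system
\[ \hat{\Box}\, \hat{A}_a = \hat{R}_{ab}\hat{A}^b + \mathcal{F}_a\big(\hat{A}, \hat{\nabla}\hat{A}, \hat{\phi}, \hat{\nabla}\hat{\phi}\big), \qquad \hat{\Box}\, \hat{\phi} = -\hat{\phi} + \mathcal{G}\big(\hat{A}, \hat{\phi}, \hat{\nabla}\hat{\phi}\big), \]
where $\hat{\Box} = \hat{\nabla}^a \hat{\nabla}_a$ is the covariant wave operator on $\mathfrak{E}$ and $\mathcal{F}_a, \mathcal{G}$ are polynomial in the fields with at most one derivative. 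The Cauchy data for this system are read off from $(\hat{\mathbf{a}}, \hat{\mathbf{e}}, \hat{a}_0, \hat{\phi}_0, \hat{\phi}_1)$: the spatial data $\hat{A}_a|_0$ are prescribed, $\partial_\tau \hat{A}_i|_0$ is fixed by $\hat{\mathbf{e}}$ via $\hat{F}_{0i}$, and $\partial_\tau \hat{A}_0|_0$ is chosen so that $\hat{\nabla}_a \hat{A}^a|_0 = 0$.

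Next I would invoke the linear theory on the compact slice: a solution of $\hat{\Box} u = f$ obeys the energy inequality $\|u\|_{E_s([0,\tau])} \leq C\big(\|u|_0\|_{H^s} + \|\partial_\tau u|_0\|_{H^{s-1}} + \int_0^\tau \|f(\sigma)\|_{H^{s-1}}\,\d\sigma\big)$. I would set up a Picard iteration, obtaining $(\hat{A}^{(n+1)}, \hat{\phi}^{(n+1)})$ by solving the linear wave equations with the nonlinearities frozen at the $n$-th iterate. The analytic input is that $H^s(\mathbb{S}^3)$ is a Banach algebra for $s > 3/2$ (so for $s \geq 2$) and embeds in $L^\infty(\mathbb{S}^3)$; this controls the polynomial nonlinearities in $H^{s-1}$ by a polynomial $P$ in $\|(\hat{A}, \hat{\phi})\|_{E_s}$. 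Choosing a ball of radius $R \sim M$ and a time $\epsilon$ with $\epsilon\, P(R) \lesssim R$, the iteration map preserves and contracts that ball in $E_s([0,\epsilon]\times\mathbb{S}^3)$, producing a unique fixed point — the reduced solution. The closing inequality $\epsilon\, P(M) \lesssim M$ makes the largest admissible $\epsilon$ depend continuously on $M$ and tend to infinity as $M \to 0$, since smaller data weakens the nonlinear feedback.

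It then remains to show the reduced solution genuinely satisfies \eqref{YMHcylinder}, i.e. that the gauge is propagated. I would study the defect $\psi \defeq \hat{\nabla}_a \hat{A}^a$. For the reduced solution one has $\hat{\D}^b \hat{F}_{ab} - J_a = \hat{\D}_a \psi$, where $J_a = -((\hat{\D}_a \hat{\phi}) \cdot \theta_\alpha \hat{\phi})\theta_\alpha$ is the Higgs current. Applying $\hat{\D}^a$ and using the identity $\hat{\D}^a \hat{\D}^b \hat{F}_{ab} \equiv 0$ (antisymmetry of $\hat{F}$ and the curvature symmetries) together with the covariant conservation $\hat{\D}^a J_a = 0$ (a consequence of the Higgs equation) yields a homogeneous gauge-covariant wave equation $\hat{\D}^a \hat{\D}_a \psi = 0$. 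Its Cauchy data vanish: $\psi|_{\tau = 0} = 0$ holds by the choice of $\partial_\tau \hat{A}_0|_0$ above, while $\partial_\tau \psi|_{\tau=0} = 0$ reduces, after substituting the reduced equation for $\partial_\tau^2 \hat{A}_0$, exactly to the constraint \eqref{YMHconstraint} — this is the role played by \eqref{YMHconstraint}. Energy uniqueness for the covariant wave equation on $\mathfrak{E}$ then forces $\psi \equiv 0$, so the reduced system coincides with \eqref{YMHcylinder} and the fixed point is a bona fide Yang--Mills--Higgs solution.

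Finally, uniqueness up to Lorenz-gauge-preserving transformations follows from a Grönwall estimate on $E_s$ applied to the difference of two Lorenz-gauge solutions, together with the observation that any solution can be brought to Lorenz gauge by solving a wave equation for the gauge parameter, the residual freedom being transformations whose generator solves a homogeneous wave equation. I expect the main obstacle to be the constraint-propagation step: deriving the wave equation for $\psi$ cleanly (which requires the contracted Bianchi-type identity and the divergence-freeness of the Higgs current) and, in particular, verifying that $\partial_\tau \psi|_0 = 0$ is precisely equivalent to \eqref{YMHconstraint}. The derivative nonlinearities in the Yang--Mills part also demand care in the algebra estimates, though at the regularity $s \geq 2$ no genuine low-regularity difficulty arises.
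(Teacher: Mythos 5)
The paper itself offers no proof of this statement: it is imported verbatim as a known result of Choquet-Bruhat and Christodoulou \cite{ChoquetBruhatChristodoulou1981}, and the only argument the paper supplies nearby is for the subsequent corollary (transforming from Lorenz to temporal gauge). So your reconstruction can only be compared against the cited reference, and there it matches: Lorenz-gauge reduction to a semilinear wave system on $\mathbb{R} \times \mathbb{S}^3$, Picard iteration closed by the energy inequality together with the algebra property of $H^s(\mathbb{S}^3)$ for $s \geq 2$, and recovery of the actual Yang--Mills--Higgs system by showing the gauge defect $\psi \defeq \hat{\nabla}_a \hat{A}^a$ solves a linear wave equation with vanishing Cauchy data, the constraint \eqref{YMHconstraint} being exactly what forces $\partial_\tau \psi|_{\tau=0} = 0$. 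This is the standard (and essentially the original) argument, and your outline of it is sound. Two small imprecisions, neither fatal: first, because the reduced Higgs equation is itself gauge-fixed (the term $(\hat{\nabla}^a \hat{A}_a)\hat{\phi} = \psi \hat{\phi}$ is dropped), the current conservation $\hat{\D}^a J_a = 0$ holds only modulo terms linear in $\psi$, so the propagation equation is not literally $\hat{\D}^a \hat{\D}_a \psi = 0$ but a wave equation homogeneous and linear in $\psi$ with field-dependent coefficients --- vanishing data still gives $\psi \equiv 0$, so nothing breaks. Second, the conclusion that $\epsilon \to \infty$ as $M \to 0$ does not follow from the scaling $\epsilon\, P(R) \lesssim R$ alone, since the system has zeroth-order \emph{linear} terms (the mass term $\hat{\phi}$ and the Ricci term); one needs the additional, routine, observation that these terms generate a globally defined linear propagator with uniformly bounded energy on the compact cylinder, so that the time of existence of the perturbed problem degenerates only through the nonlinearity.
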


\begin{remark} The component $\hat{A}_0$ is non-dynamical and the $\hat{a}_0$ component of the initial data can in fact be chosen to be zero without restricting the class of solutions (\emph{c.f.} \S4 of \cite{ChoquetBruhatChristodoulou1981}).	
\end{remark}

\begin{corollary} Let $(\hat{\mathbf{a}}, \hat{\mathbf{e}}, \hat{\phi}_0, \hat{\phi}_1) \in (H^2(\mathbb{S}^3) \times H^1(\mathbb{S}^3))^2$ be temporal gauge initial data for the system \eqref{YMHcylinder} on $\mathfrak{E}$, satisfying the constraint \eqref{YMHconstraint}. Then there exists $\epsilon > 0$ such that there exists a solution $(\hat{A}_a, \hat{\phi}) \in E_2((-\epsilon,\epsilon) \times \mathbb{S}^3)^2$ to \eqref{YMHcylinder} in temporal gauge, with
\[ \hat{\mathbf{A}} \Big|_{\tau =0} = \hat{\mathbf{a}}, \quad \hat{\mathbf{E}} \Big|_{\tau = 0} = \hat{\mathbf{e}}, \quad \hat{\phi} \Big|_{\tau = 0} = \hat{\phi}_0, \quad \hat{\pi} \Big|_{\tau = 0} = \hat{\phi}_1. \]
The largest such number $\epsilon$ depends continuously on the size $M'$ of the data, where
\[ M' = \| \hat{\phi}_0 \|_{H^2} + \| \hat{\mathbf{a}} \|_{H^2} + \| \hat{\phi}_1 \|_{H^1} + \| \hat{\mathbf{e}}_1 \|_{H^1}, \]
and tends to infinity as $M'$ tends to zero. Furthermore, the solution is unique up to gauge transformations preserving the temporal gauge.
\end{corollary}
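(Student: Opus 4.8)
The plan is to produce a solution in Lorenz gauge from \Cref{thm:ChoquetBruhatChristodouloulocalexistence} and then conjugate it into temporal gauge by a gauge transformation built from its time component $\hat{A}_0$. Because the data is prescribed in temporal gauge we have $\hat{A}_0|_{\tau=0}=0$, so by the remark following \Cref{thm:ChoquetBruhatChristodouloulocalexistence} we may apply that theorem with $s=2$ and $\hat{a}_0=0$. Then $\hat{\pi}=\hat{\phi}_1+\hat{a}_0\hat{\phi}_0=\hat{\phi}_1$, the constraint \eqref{YMHconstraint} reduces to the temporal-gauge constraint \eqref{cylinderconstraint}, and the hypotheses are met. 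This produces a Lorenz-gauge solution $(\hat{A}_a,\hat{\phi})\in E_2((-\epsilon,\epsilon)\times\mathbb{S}^3)^2$ realising the given data, with $\epsilon$ depending continuously on $M'$ and tending to infinity as $M'\to0$.

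I would then remove the (generally non-vanishing, for $\tau\neq0$) time component by a gauge transformation. The transformation law $\hat{A}_0\leadsto U\hat{A}_0U^{-1}+U\partial_\tau U^{-1}$ from \Cref{sec:YMH} shows that temporal gauge is reached exactly when $U$ solves the linear transport equation
\[ \partial_\tau U = U\hat{A}_0, \qquad U|_{\tau=0}=\mathbbm{1}, \]
along the integral curves of $\partial_\tau$; this has a unique solution, the $\tau$-ordered exponential of $\hat{A}_0$, and since $\hat{A}_0|_{\tau=0}=0$ the transformation is the identity on $\Sigma_0$. Conjugating, the pair $(U\hat{A}_aU^{-1}+U\partial_a U^{-1},\,U\hat{\phi})$ again solves \eqref{YMHcylinder} by gauge covariance, now in temporal gauge. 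Since $\hat{F}_{ab}\leadsto U\hat{F}_{ab}U^{-1}$ and $\hat{\D}_a\hat{\phi}\leadsto U\hat{\D}_a\hat{\phi}$ while $U|_{\tau=0}=\mathbbm{1}$, the quantities $\hat{\mathbf{A}}$, $\hat{\mathbf{E}}$, $\hat{\phi}$ and $\hat{\pi}$ are unchanged on $\Sigma_0$, so the transformed solution realises exactly the data $(\hat{\mathbf{a}},\hat{\mathbf{e}},\hat{\phi}_0,\hat{\phi}_1)$.

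The heart of the matter is the regularity bookkeeping, which I expect to be the main obstacle. As $\dim\mathbb{S}^3=3$ and $2>3/2$, the space $H^2(\mathbb{S}^3)$ is a Banach algebra and $H^1(\mathbb{S}^3)$ is a module over it; solving the transport equation by Picard iteration in $C^0((-\epsilon,\epsilon);H^2(\mathbb{S}^3))$ and differentiating in $\tau$ places both $U$ and $U^{-1}$ in $E_2$, whence $U\hat{\phi}\in H^2$ and $U\hat{\pi}\in H^1$ at the expected regularity. The delicate point is the \emph{spatial} regularity of the transformed potential: the inhomogeneous term $U\partial_i U^{-1}=-(\partial_i U)U^{-1}$ is a priori only in $H^1$, so a naive count loses a derivative on the connection. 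To recover it I would decompose the transformed $\hat{\mathbf{A}}$ on $\mathbb{S}^3$ into transverse and longitudinal parts: the transverse part is controlled by the transformed field strength $\hat{F}=U\hat{F}U^{-1}$ and so gains a derivative, while the longitudinal part obeys $\partial_\tau(\slashgrad\cdot\hat{\mathbf{A}})=\slashgrad\cdot\hat{\mathbf{E}}$, whose right-hand side is governed by the propagated constraint \eqref{cylinderconstraint}. Feeding this control back through the algebra and module structure together with elliptic estimates on $\mathbb{S}^3$ in a short bootstrap returns $\hat{\mathbf{A}}\in H^2$, so the full solution lies in $E_2((-\epsilon,\epsilon)\times\mathbb{S}^3)^2$; the continuous dependence of $\epsilon$ on $M'$ is inherited directly from \Cref{thm:ChoquetBruhatChristodouloulocalexistence}.

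Finally, for uniqueness up to temporal-gauge-preserving (that is, $\tau$-independent) transformations, I would run the construction in reverse. Given two temporal-gauge solutions with the same data, I would carry each to Lorenz gauge by solving the covariant wave equation for the gauge function with trivial Cauchy data on $\Sigma_0$ (thereby preserving both the data and the normalisation $\hat{A}_0|_{\tau=0}=0$). \Cref{thm:ChoquetBruhatChristodouloulocalexistence} then identifies the two Lorenz representatives up to a Lorenz-gauge-preserving transformation, and composing the three gauge maps shows the original temporal solutions differ by a residual, time-independent gauge transformation. Apart from the regularity upgrade above, every step is a direct transcription of the gauge covariance recorded in \Cref{sec:YMH} and the quoted properties of \Cref{thm:ChoquetBruhatChristodouloulocalexistence}.
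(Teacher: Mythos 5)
Your proposal follows essentially the same route as the paper's proof: apply \Cref{thm:ChoquetBruhatChristodouloulocalexistence} with $s=2$ and $\hat{a}_0=0$, then transform the Lorenz-gauge solution to temporal gauge by solving the transport equation $\partial_\tau U = U\hat{A}_0$, $U|_{\tau=0}=\mathbbm{1}$, observing that the induced data on $\Sigma_0$ is unchanged because $U$ restricts to the identity there. If anything, you are more careful than the paper at two points: the paper writes $U=\e^u$ and reduces the gauge condition to $\partial_\tau u = \hat{A}_0$, which is strictly valid only when $u$ and $\partial_\tau u$ commute (your $\tau$-ordered exponential is the correct general solution), and it passes over the regularity bookkeeping entirely, whereas your transverse/longitudinal decomposition combined with the propagated constraint is precisely the kind of argument needed to recover the apparent loss of one spatial derivative in the term $U\partial_i U^{-1}$.
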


\begin{proof} This is immediate from \Cref{thm:ChoquetBruhatChristodouloulocalexistence}, if one can demonstrate that there exists a gauge transformation from the Lorenz gauge to the temporal gauge preserving the requisite regularity. A general gauge transformation $U$ of the system \eqref{YMHcylinder} takes
\[ \hat{A}_a \leadsto U \hat{A}_a U^{-1} + U \partial_a U^{-1},  \]
so to set $\hat{A}_0 = 0$ one needs to solve $U \hat{A}_0 U^{-1} + U \partial_\tau U^{-1} = 0$, or equivalently	
\[ \hat{A}_0 = U^{-1} \partial_\tau U. \]
Since $\mathrm{G}$ is a compact connected matrix Lie group, there exists $u \in \mathfrak{g}$ such that $U = \e^u$, so in terms of $u$ the above equation becomes $\partial_\tau u = \hat{A}_0$. This has the solution
\[ u(\tau) = u(0) + \int_0^\tau \hat{A}_0 (\sigma) \, \d \sigma, \]
so choosing $u(0) = 0$ (and $\hat{a}_0 = 0$) gives the required gauge transformation.
\end{proof}

\begin{remark} \label{rmk:energyblowup} It is implicit in \Cref{thm:ChoquetBruhatChristodouloulocalexistence} that if the largest time of existence is finite, $\epsilon_{\mathrm{max}} < \infty$, then
\[ \| \hat{\phi}(\tau) \|_{H^2(\mathbb{S}^3)} + \| \hat{\mathbf{A}}(\tau) \|_{H^2(\mathbb{S}^3)} + \| \hat{\mathbf{E}}(\tau) \|_{H^1(\mathbb{S}^3)} + \| \hat{\pi}(\tau) \|_{H^1(\mathbb{S}^3)} \longrightarrow \infty \]
as $\tau \to \epsilon_{\mathrm{max}}$. We shall show that the time of existence is in fact infinite by showing that the above norm does not blow up in finite time.	
\end{remark}

\subsection{Energy Estimates}

On the Einstein cylinder $\mathfrak{E}$ we may take the stress-energy tensor for the system \eqref{fieldequations} to be
\begin{equation} \label{modifiedstresstensor} \hat{\mathbf{\Theta}}_{ab} = - \langle \hat{F}_{ac}, \hat{F}_b^{\phantom{b}c} \rangle + \frac{1}{4} \mathfrak{e}_{ab} \langle \hat{F}_{cd}, \hat{F}^{cd} \rangle + (\hat{\D}_a \hat{\phi} ) \cdot (\hat{\D}_b \hat{\phi}) - \frac{1}{2} \mathfrak{e}_{ab} (\hat{\D}_c \hat{\phi}) \cdot (\hat{\D}^c \hat{\phi}) + \frac{1}{2} \mathfrak{e}_{ab} | \hat{\phi} |^2 + \frac{1}{4} \lambda \mathfrak{e}_{ab} |\hat{\phi} |^4.	
\end{equation}
This differs from the canonical stress-energy tensor \eqref{stresstensor} on $\mathfrak{E}$ by the term $\frac{1}{6} \hat{R}_{ab} |\hat{\phi}|^2$, but satisfies the exact conservation law
\[ \hat{\nabla}^a \hat{\mathbf{\Theta}}_{ab} = 0. \]
It thus defines a conserved energy on $\mathfrak{E}$,
\begin{align*} \hat{E}_0 &= \int_{\mathbb{S}^3} \hat{\mathbf{\Theta}}_{00} \dvol_{\mathfrak{s}_3} \\
& = \int_{\mathbb{S}^3} \hat{\mathbf{\Theta}}_{ab} (\partial_\tau)^a (\partial_\tau)^b  \dvol_{\mathfrak{s}_3} \\
& = \frac{1}{2} \int_{\mathbb{S}^3} \left( | \hat{\mathbf{E}} |^2 + | \hat{\mathbf{B}} |^2 + | \hat{\pi} |^2 + | \hat{\boldsymbol{\slashed{\D}}} \hat{\phi} |^2 + | \hat{\phi} |^2 + \frac{1}{2} \lambda | \hat{\phi} |^4 \right) \dvol_{\mathfrak{s}_3},	\\
\end{align*}
satisfying
\[ \frac{\d \hat{E}_0}{\d \tau}  = 0, \]
where $\hat{\boldsymbol{\slashed{\D}}} \hat{\phi}$ is the projection onto $\mathbb{S}^3$ of $\D_a \phi$. Let us also define the approximate energies
\begin{equation} \hat{\mathcal{E}}_1(\tau) \defeq \frac{1}{2} \int_{\mathbb{S}^3} \left( | \hat{\mathbf{E}}|^2 + | \slashgrad \hat{\mathbf{A}} |^2 + | \hat{\mathbf{A}} |^2 + | \hat{\pi}|^2 + | \slashgrad \hat{\phi} |^2 + | \hat{\phi} |^2 \right) \dvol_{\mathfrak{s}_3}
\end{equation}
and
\begin{equation} \hat{\mathcal{E}}_2(\tau) \defeq \frac{1}{2} \int_{\mathbb{S}^3} \left( | \slashgrad \hat{\mathbf{E}} |^2 + | \slashgrad^2 \hat{\mathbf{A}} |^2 + | \slashgrad \hat{\pi} |^2 + | \slashgrad^2 \hat{\phi} |^2 \right) \dvol_{\mathfrak{s}_3}.
\end{equation}
It is clear that $(\hat{\mathcal{E}}_1 + \hat{\mathcal{E}}_2)^{1/2}$ is equivalent to the $(H^2 \times H^1)^2$ norm of the solution (in temporal gauge) on $\{ \tau \} \times \mathbb{S}^3$. By differentiating $\hat{\mathcal{E}}_1$ in $\tau$, integrating by parts and using the equations \eqref{fieldequationscylindertemporalgauge} and \eqref{cylinderconstraint}, one arrives at the estimate
\begin{align} \label{energy1estimate} \begin{split} \left| \frac{\d \hat{\mathcal{E}}_1}{\d \tau} \right| &\leq C \left( 1 + \| \hat{F}(\tau) \|_{L^\infty} + \| \hat{\D} \hat{\phi} (\tau) \|_{L^\infty} \right) \hat{\mathcal{E}}_1 + \lambda \| \hat{\pi} (\tau) \|_{L^\infty} \| \hat{\phi} \|^3_{L^3} \\
& \leq C \left( 1+ \| \hat{F}(\tau) \|_{L^\infty} + \| \hat{\D} \hat{\phi} (\tau) \|_{L^\infty} + \lambda \| \hat{\D} \hat{\phi}(\tau) \|_{L^\infty} \| \hat{\phi}(\tau) \|_{L^\infty} \right) \hat{\mathcal{E}}_1
\end{split}
\end{align}
where the constant $C>0$ depends only on the structure group $\mathrm{G}$ and the geometry of $\mathbb{S}^3$. One similarly finds that
\begin{equation} \label{energy2estimate} \left| \frac{\d \hat{\mathcal{E}}_2}{\d \tau} \right| \leq C \left( 1 + \| \hat{F}(\tau) \|_{L^\infty} + \| \hat{\D} \hat{\phi}(\tau) \|_{L^\infty} + \| \hat{\phi}(\tau) \|_{L^\infty} + \| \hat{\mathbf{A}}(\tau) \|_{L^\infty} \right)^2 (\hat{\mathcal{E}}_1 + \hat{\mathcal{E}}_2 ).	
\end{equation}
Putting together \eqref{energy1estimate} and \eqref{energy2estimate}, it follows that
\begin{equation} \label{energy12estimate} \left| \frac{\d}{\d \tau} (\hat{\mathcal{E}}_1 + \hat{\mathcal{E}}_2 ) \right| \leq C \left( 1 + \| \hat{F}(\tau) \|_{L^\infty} + \| \hat{\D} \hat{\phi}(\tau) \|_{L^\infty} + \| \hat{\phi}(\tau) \|_{L^\infty} + \| \hat{\mathbf{A}}(\tau) \|_{L^\infty} \right)^2 (\hat{\mathcal{E}}_1 + \hat{\mathcal{E}}_2 ).
\end{equation}
To estimate $\| \hat{\phi} \|_{L^\infty}$ and $\| \hat{\mathbf{A}} \|_{L^\infty}$, notice that $\partial_\tau \hat{\phi} = \hat{\pi}$ and $\partial_\tau \hat{\mathbf{A}} = \hat{\mathbf{E}}$ imply
\[ \hat{\phi}(\tau) = \hat{\phi}(0) + \int_0^\tau \hat{\pi}(\sigma) \, \d \sigma \quad \text{and} \quad \hat{\mathbf{A}}(\tau) = \hat{\mathbf{A}}(0) + \int_0^\tau \hat{\mathbf{E}}(\sigma) \, \d \sigma,  \]
which give the estimates
\begin{equation} \label{phiLinftyestimate} \| \hat{\phi}(\tau) \|_{L^\infty} \leq C \| \hat{\phi}_0 \|_{H^2} + \int_0^\tau \| \hat{\pi}(\sigma) \|_{L^\infty} \, \d \sigma \quad \text{and} \quad \| \hat{\mathbf{A}}(\tau) \|_{L^\infty} \leq C \| \hat{\mathbf{a}} \|_{H^2} + \int_0^\tau \| \hat{\mathbf{E}}(\sigma) \|_{L^\infty} \, \d \sigma.
\end{equation}
We thus have the following.

\begin{theorem} \label{thm:largedataexistence} Let $(\hat{\mathbf{a}}, \hat{\mathbf{e}}, \hat{\phi}_0, \hat{\phi}_1) \in (H^2(\mathbb{S}^3) \times H^1(\mathbb{S}^3))^2$ be temporal gauge initial data for the system \eqref{YMHcylinder} on $\mathfrak{E}$ satisfying the constraint \eqref{YMHconstraint}. Then there exists a global solution $(\hat{A}_a, \hat{\phi}) \in E_2(\mathbb{R} \times \mathbb{S}^3)^2$ to \eqref{YMHcylinder} in temporal gauge with
\[ \hat{\mathbf{A}} \Big|_{\tau = 0 } = \hat{\mathbf{a}}, \quad \hat{\mathbf{E}} \Big|_{\tau = 0} = \hat{\mathbf{e}}, \quad \hat{\phi} \Big|_{\tau =0} = \hat{\phi}_0, \quad \text{and} \quad \hat{\pi} \Big|_{\tau = 0} = \hat{\phi}_1. \]
Furthermore, the solution is unique up to gauge transformations preserving the temporal gauge.
\end{theorem}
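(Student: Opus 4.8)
The plan is to run a continuation argument: the local existence Corollary to \Cref{thm:ChoquetBruhatChristodouloulocalexistence} supplies a solution on a maximal interval of existence, and I will combine the $L^\infty$ bounds of \Cref{thm:Linftyestimatesstrip} with the energy inequality \eqref{energy12estimate} to show that the $(H^2 \times H^1)^2$ norm cannot blow up, so that by the blow-up criterion of \Cref{rmk:energyblowup} the solution extends. The crucial structural feature I will exploit is that the strip height $\tau_0$ furnished by \Cref{thm:Linftyestimatesstrip} is \emph{independent} of the size of the data; this is precisely what upgrades local extension into global existence.

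First I would fix the temporal-gauge solution $(\hat{A}_a, \hat{\phi})$ produced by the Corollary on its maximal forward interval of existence $[0, \tau_+)$, and recall from \Cref{rmk:energyblowup} that global existence on $[0,\infty)$ follows once I show that $(\hat{\mathcal{E}}_1 + \hat{\mathcal{E}}_2)(\tau)$ remains finite up to any finite time. By \Cref{thm:Linftyestimatesstrip}, on the strip $[0,\min(\tau_+,\tau_0)] \times \mathbb{S}^3$ the quantities $\|\hat{F}(\tau)\|_{L^\infty}$ and $\|\hat{\D}\hat{\phi}(\tau)\|_{L^\infty}$ are bounded by a finite constant. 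To feed this into \eqref{energy12estimate} I still need to control $\|\hat{\phi}\|_{L^\infty}$ and $\|\hat{\mathbf{A}}\|_{L^\infty}$; here I would use \eqref{phiLinftyestimate} and observe that in temporal gauge $\hat{\pi} = \hat{\D}_0\hat{\phi}$ and $\hat{\mathbf{E}}_i = \hat{F}_{0i}$ are components of $\hat{\D}\hat{\phi}$ and $\hat{F}$ respectively, so that $\|\hat{\pi}(\sigma)\|_{L^\infty} \leq C\|\hat{\D}\hat{\phi}(\sigma)\|_{L^\infty}$ and $\|\hat{\mathbf{E}}(\sigma)\|_{L^\infty} \leq C\|\hat{F}(\sigma)\|_{L^\infty}$ are already bounded on the strip. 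Integrating \eqref{phiLinftyestimate} then bounds $\|\hat{\phi}\|_{L^\infty}$ and $\|\hat{\mathbf{A}}\|_{L^\infty}$ on the strip as well.

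With all four $L^\infty$ quantities on the right-hand side of \eqref{energy12estimate} bounded by a finite constant on $[0,\min(\tau_+,\tau_0)]$, Grönwall's inequality gives
\[ (\hat{\mathcal{E}}_1 + \hat{\mathcal{E}}_2)(\tau) \leq (\hat{\mathcal{E}}_1 + \hat{\mathcal{E}}_2)(0)\, \e^{C\tau} \]
on that interval, so the $(H^2 \times H^1)^2$ norm stays finite up to time $\tau_0$. By the blow-up criterion this forces $\tau_+ \geq \tau_0$. Since $\tau_0$ does not depend on the size of the data, I can now restart the Corollary from the Cauchy slice $\{\tau = \tau_0\}$ — whose data again lies in $(H^2 \times H^1)^2$ — to extend the solution to $[0, 2\tau_0]$, and iterate to cover all of $[0,\infty)$; applying the same argument to the time-reversed problem covers $(-\infty, 0]$. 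Uniqueness up to temporal-gauge transformations is inherited slice-by-slice from the local uniqueness in the Corollary.

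The main obstacle I anticipate is not any single estimate but the bookkeeping that makes the iteration close: one must verify that the constant $C$ appearing in the Grönwall step, though it depends on the (finite) $L^\infty$ bounds and on the data, stays finite on each strip, and crucially that the \emph{length} $\tau_0$ of each extension step is the fixed, data-independent number of \Cref{thm:Linftyestimatesstrip}. Were $\tau_0$ to shrink with the possibly growing norm at $\{\tau = k\tau_0\}$, the scheme could stall at a finite time; the data-independence of $\tau_0$ is exactly what rules this out and converts the local theory into global existence.
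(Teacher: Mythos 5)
Your proposal is correct and follows essentially the same route as the paper: the same three ingredients (\Cref{thm:Linftyestimatesstrip} with its data-independent $\tau_0$, the bounds \eqref{phiLinftyestimate}, and Gr\"onwall applied to \eqref{energy12estimate}) combined with the blow-up criterion of \Cref{rmk:energyblowup}. The only difference is organizational --- the paper runs a single contradiction argument by placing the strip at $\tau = \epsilon_{\mathrm{max}} - \tau_0/2$ and showing the $(H^2 \times H^1)^2$ norm stays finite past $\epsilon_{\mathrm{max}}$, whereas you march forward in steps of length $\tau_0$ --- but both hinge on exactly the data-independence of $\tau_0$ that you identify as the crucial point.
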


\begin{proof} Let $\epsilon_{\mathrm{max}} > 0$ be the maximal time of existence guaranteed by \Cref{thm:ChoquetBruhatChristodouloulocalexistence}. As per \Cref{rmk:energyblowup}, either $\epsilon_{\mathrm{max}} = \infty$ or the $(H^2 \times H^1)^2$ norm of the solution blows up as $\tau \to \epsilon_{\mathrm{max}}$. We show that the former is true by assuming that $\epsilon_{\mathrm{max}} < \infty$ and deriving a contradiction. We work with $\tau \geq 0$; the following argument applies equally well in the case $\tau < 0$. The local solution $(\hat{A}_a, \hat{\phi})$ satisfies
\[ \| \hat{\mathbf{A}} (\tau) \|_{H^2} + \| \hat{\mathbf{E}}(\tau) \|_{H^1} + \| \hat{\phi}(\tau) \|_{H^2} + \| \hat{\pi}(\tau) \|_{H^1} < \infty \]
for all $\tau < \epsilon_{\mathrm{max}}$, and in particular at $\tau = \epsilon_{\mathrm{max}} - \tau_0/2$, where $\tau_0$ is as in \Cref{thm:Linftyestimatesstrip}. By considering the fields $(\hat{\mathbf{A}}, \hat{\mathbf{E}}, \hat{\phi}, \hat{\pi})$ restricted to $\tau = \epsilon_{\mathrm{max}} - \tau_0/2$ as initial data and applying \Cref{thm:Linftyestimatesstrip}, one has that
\[ \| \hat{F}(\tau) \|_{L^\infty} + \| \hat{\D} \hat{\phi}(\tau) \|_{L^\infty} < \infty \]
for $\tau \leq \epsilon_{\mathrm{max}} + \tau_0/2$. But then the estimates \eqref{phiLinftyestimate} show that
\[ \| \hat{\phi}(\tau) \|_{L^\infty} + \| \hat{\mathbf{A}}(\tau) \|_{L^\infty} < \infty \]
for $\tau \leq \epsilon_{\mathrm{max}} + \tau_0/2$, and so by \eqref{energy12estimate} one deduces that $(\hat{\mathcal{E}}_1 + \hat{\mathcal{E}}_2)(\tau) < \infty$ up to $\tau = \epsilon_{\mathrm{max}} + \tau_0/2$. Since $(\hat{\mathcal{E}}_1 + \hat{\mathcal{E}}_2)^{1/2}$ is equivalent to the $(H^2 \times H^1)^2$ norm of $(\hat{\mathbf{A}}, \hat{\mathbf{E}}, \hat{\phi}, \hat{\pi})$, this contradicts the assumption that $\epsilon_{\mathrm{max}}$ was the maximal time of existence. Thus $\epsilon_{\mathrm{max}} = \infty$.
\end{proof}

\section{Asymptotics} \label{sec:asymptotics}

\subsection{De Sitter Space}

Recall that de Sitter space $\mathrm{dS}_4$ is the manifold $\mathbb{R} \times \mathbb{S}^3$ equipped with the metric 
\begin{equation} \label{deSittermetric} \tilde{g} = \d \alpha^2 - (\cosh^2 \alpha) \mathfrak{s}_3. \end{equation}
The vector field $\tilde{T}^a = \partial_\alpha$ is uniformly timelike and normal to surfaces of constant $\alpha$; we define the associated Riemannian metric $\tilde{\Gamma}$ on $\mathrm{dS}_4$ by
\[ \tilde{\Gamma}_{ab} = 2 \tilde{T}_a \tilde{T}_b - \tilde{g}_{ab}. \]
By making the change of variables
\[ \tan \frac{\tau}{2} = \tanh \frac{\alpha}{2}, \]
one finds that the de Sitter metric is conformal to the metric on the Einstein cylinder,
\[ \tilde{g} = \frac{1}{\cos^2 \tau} ( \d \tau^2 - \mathfrak{s}_3 ), \]
with the associated conformal factor $\omega = \cos \tau$. Under this conformal transformation $\mathrm{dS}_4$ is mapped to the section $(-\pi/2, \pi/2) \times \mathbb{S}^3$ of the Einstein cylinder, which puts the past and future null infinities of $\mathrm{dS}_4$ at
\[ \scri^- = \left\{ \tau = - \frac{\pi}{2} \right\} \times \mathbb{S}^3 \quad \text{and} \quad \scri^+ = \left\{ \tau = \frac{\pi}{2} \right\} \times \mathbb{S}^3. \]
Let us denote by $\tilde{\phi}$ and $\tilde{A}_a$ the scalar field and the Yang--Mills potential on de Sitter space. These are conformally related to the corresponding fields $\hat{\phi}$ and $\hat{A}_a$ on the Einstein cylinder by
\[ \hat{\phi} = \omega^{-1} \tilde{\phi}, \qquad \hat{A}_a = \tilde{A}_a. \]
It is clear that $(H^2 \times H^1)^2$ initial data on the hypersurface $\{ \alpha = 0 \}$ in de Sitter space defines $(H^2 \times H^1)$ initial data on $\{ \tau = 0\}$ in the Einstein cylinder. This follows from the fact that $\partial_\alpha$ is everywhere parallel to $\partial_\tau$, and the form of the conformal factor $\omega$. By \Cref{thm:largedataexistence}, we thus have a temporal gauge solution $(\hat{A}_a, \hat{\phi}) \in E_2(\mathbb{R} \times \mathbb{S}^3)^2$ on $\mathfrak{E}$, which is uniformly continuous on $I \times \mathbb{S}^3$ for any compact interval $I$. Indeed, this follows from the Sobolev embedding $H^2(\mathbb{S}^3) \hookrightarrow C^{0, \frac{1}{2}}(\mathbb{S}^3)$, which implies the inclusion
\[ E_2(I \times \mathbb{S}^3) \subset C^0(I \times \mathbb{S}^3). \]
Fixing the residual gauge freedom if necessary, we thus deduce that there exists a constant $c>0$ such that
\[ | \tilde{\phi} | \leq c \omega \leq c \e^{- |\alpha|}, \]
and, since $|\tilde{A}|^2_{\tilde{\Gamma}} = \omega^2 |\hat{A}|^2_{\hat{\Gamma}}$, also that
\[ |\tilde{A}|_{\tilde{\Gamma}} \leq c \omega \leq c \e^{-|\alpha|}. \]

\subsection{Minkowski Space}

Here we denote by $(A_a, \phi)$ the fields on Minkowski space $\mathbb{M}$, with the corresponding conformally related fields on the Einstein cylinder still denoted $(\hat{A}_a, \hat{\phi})$. Let $(\mathbf{a}, \mathbf{e}, \phi_0, \phi_1)$ be temporal gauge initial data for \eqref{Minkowskifieldequationstemporalgauge} satisfying the constraint \eqref{Minkowskiconstraint}, such that
\[ (\hat{\mathbf{a}}, \hat{\mathbf{e}}, \hat{\phi}_0, \hat{\phi}_1) = (\mathbf{a}, r_+^2\mathbf{e}, r_+^2 \phi_0, r_+^4 \phi_1) \in H^2(\mathbb{S}^3) \times H^1(\mathbb{S}^3) \times H^2(\mathbb{S}^3) \times H^1(\mathbb{S}^3). \footnote{Note that a sufficient condition is that $(\mathbf{a}, \mathbf{e}, \phi_0, \phi_1) \in (H^2_1(\mathbb{R}^3) \times H^1_2(\mathbb{R}^3))^2$, in the notation of \cite{ChoquetBruhatChristodoulou1981}.} \]
By construction, the data is such that it satisfies the hypotheses of \Cref{thm:largedataexistence}, giving a global temporal gauge solution $(\hat{A}_a, \hat{\phi})$ on the Einstein cylinder. This solution is related to the solution on Minkowski space by the usual scaling $\phi = \Omega \hat{\phi}$ and $A_a = \hat{A}_a$, where $\Omega = 2 (1+(t-r)^2)^{-1/2} (1+(t+r)^2)^{-1/2}$. Set $u=t-r$, $v=t+r$, $u_+ = \sqrt{1+u^2}$, and $v_+ = \sqrt{1+v^2}$. On $\mathbb{M}$ we have the tetrad
\[ l^a = - \partial_t + \partial_r = - 2 \partial_u, \quad n^a = \partial_t + \partial_r = 2 \partial_v, \quad e^a_\theta = \frac{1}{r} \partial_\theta, \quad e^a_\phi = \frac{1}{r \sin \theta} \partial_\phi, \]
with the metric expressed as
\[ \eta_{ab} = -\frac{1}{2}(l_a n_b + n_a l_b) + (e_A)_a (e_A)_b. \]
On $\mathfrak{E}$ we define the variables $\hat{u} = \tau - \zeta$, $\hat{v} = \tau + \zeta$, and the tetrad
\[ \hat{l}^a = - \partial_\tau + \partial_\zeta = - 2 \partial_{\hat{u}}, \hat{n}^a = \partial_\tau + \partial_\zeta = 2 \partial_{\hat{v}}, \quad \hat{e}^a_\theta = \frac{1}{\sin \zeta} \partial_\theta, \quad \hat{e}^a_\phi = \frac{1}{\sin \zeta \sin \theta} \partial_\phi, \]
in which the metric $\mathfrak{e}$ takes the form
\[ \mathfrak{e}_{ab} = -\frac{1}{2} ( \hat{l}_a \hat{n}_b + \hat{n}_a \hat{l}_b ) + (\hat{e}_A)_a ( \hat{e}_A)_b. \]
The relation between the two tetrads is
\begin{equation} \label{tetradrelations} l^a = \frac{2}{u^2_+} \hat{l}^a, \quad n^a = \frac{2}{v^2_+} \hat{n}^a, \quad \hat{e}^a_\theta = \Omega e_\theta^a, \quad \hat{e}^a_\phi = \Omega e^a_\phi, \end{equation}
where the Minkowski conformal factor is
\[ \Omega = \frac{2}{u_+ v_+}. \]
Using the conformal scaling of $\phi$, we then immediately deduce that
\[ | \phi | \leq c u_+^{-1} v_+^{-1}. \]
On the other hand, fixing the residual gauge freedom if necessary and using the relations \eqref{tetradrelations}, for the Yang--Mills potential we deduce
\[ | A_l | \leq c u_+^{-2} |\hat{A}_{\hat{l}}| \leq c u_+^{-2}, \qquad |A_n| \leq c v_+^{-2} |\hat{A}_{\hat{n}}| \leq c v_+^{-2}, \]
and
\[ |A|_{\mathfrak{s}_2} \leq c \Omega \leq c u_+^{-1} v_+^{-1}. \]
The above decay rates reproduce the decay rates of Yang and Yu \cite{YangYu2018}, requiring one fewer order of differentiability in the data. However, our results do not apply to the case of arbitrary charge at spatial infinity.

\pagebreak

\appendix

\section{An \texorpdfstring{$L^2$}{L2} bound for $\phi$ on the cone}

\begin{lemma} \label{lem:L2bound} The $L^2$ norm of $\phi$ on the cone $K(t_0)$ satisfies the bound
\[ \| \phi\|_{L^2(K(t_0))} \leq \| \phi \|_{L^2(B(r_0))}(-t_0) + 2 E^{1/2}_{\mathrm{loc}} t_0. \]
If moreover $\lambda \neq 0$, then
\[ \| \phi \|_{L^2(K(t_0))} \leq C E^{1/2}_{\mathrm{loc}} t_0^{3/4} ( 1 + t_0^{1/4} ). \]
\end{lemma}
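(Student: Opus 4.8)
The plan is to reduce both estimates to one-dimensional integration, exploiting the gauge invariance of $|\phi|$ together with the fact that under $(r,\omega)\mapsto r\omega$ the cone measure $r^2\,\d r\,\d\Omega$ coincides with the Euclidean measure $\d^3x$ on the base ball.

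For the first inequality I would parametrise $K(t_0)$ by $(r,\omega)\mapsto(-r,r\omega)$, $0\le r\le r_0=t_0$, and for each fixed spatial point $x=r\omega$ integrate \emph{in time} along the vertical segment $t\mapsto(t,x)$ joining the base point $(-t_0,x)\in B(r_0)$ to the cone point $(-r,x)$; this segment lies inside the solid cone. Since $|\phi|^2=\phi\cdot\phi$ is gauge invariant and $\phi\cdot(A_0\phi)=0$ (the generators $\theta_\alpha$ being antisymmetric), one has $\partial_t|\phi|^2=2\phi\cdot\nabla_0\phi=2\phi\cdot\pi$, so the Lipschitz function $t\mapsto|\phi|(t,x)$ obeys $\bigl|\partial_t|\phi|\bigr|\le|\pi|$ almost everywhere by Cauchy--Schwarz (the points where $\phi$ vanishes being handled by absolute continuity). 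The fundamental theorem of calculus then yields the pointwise bound
\[ |\phi|(-r,r\omega)\le|\phi|(-t_0,r\omega)+\int_{-t_0}^{-r}|\pi|(t,r\omega)\,\d t. \]

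I would then take $L^2(K(t_0))$ norms and apply the triangle inequality. The first term contributes \emph{exactly} $\|\phi\|_{L^2(B(r_0))}(-t_0)$, precisely because $r^2\,\d r\,\d\Omega=\d^3x$ identifies the cone with the base ball. For the second term, Cauchy--Schwarz in $t$ produces a factor $t_0-r\le t_0$, and after interchanging the order of integration the spacetime integral becomes $\int_{-t_0}^0\bigl(\int_{B(-t)}|\pi|^2\,\d^3x\bigr)\,\d t$; each inner integral is an integral over a time-slice $B(-t)$ of the solid cone, hence bounded by $2E_{\mathrm{loc}}$. This gives $\le 2E_{\mathrm{loc}}t_0$ for the spacetime integral and $\le 2E_{\mathrm{loc}}t_0^2$ for the square of the second term, whence the first claim after taking square roots.

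For $\lambda\neq0$ I would remove the explicit dependence on the initial datum $\|\phi\|_{L^2(B(r_0))}$ by using the defocussing quartic term now present in the energy. Since $\tfrac14\lambda|\phi|^4$ sits inside the density of $E_{B(r_0)}(-t_0)\le E_{\mathrm{loc}}$, one has $\int_{B(r_0)}|\phi|^4\,\d^3x\le C\lambda^{-1}E_{\mathrm{loc}}$, and Hölder on the ball gives $\|\phi\|_{L^2(B(r_0))}\le|B(r_0)|^{1/4}\|\phi\|_{L^4(B(r_0))}\le Ct_0^{3/4}E_{\mathrm{loc}}^{1/4}$. Feeding this into the first inequality, the boundary term scales like $t_0^{3/4}$ and the flux term like $t_0$, so factoring out $t_0^{3/4}$ and bounding the two energy powers by $E_{\mathrm{loc}}^{1/2}$ assembles the result into the stated $CE_{\mathrm{loc}}^{1/2}t_0^{3/4}(1+t_0^{1/4})$. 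The only genuinely non-routine steps are the choice to integrate \emph{vertically} rather than along the null generators (so that the boundary values land in the full base ball, reproducing $\|\phi\|_{L^2(B(r_0))}$ rather than a boundary-sphere norm) and the reordering that converts the double integral of $|\pi|^2$ into a time-integral of slice energies controlled by $E_{\mathrm{loc}}$; the gauge-invariance identity $\bigl|\partial_t|\phi|\bigr|\le|\pi|$ is standard.
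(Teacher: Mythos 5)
Your argument is correct, and for the first inequality it takes a genuinely different (though closely related) route from the paper's. The paper works in integrated form: it applies the divergence theorem to $\nabla_a(|\phi|^2K^a)$, $K^a=\partial_t$, over the solid cone $\mathbf{K}(t_0)$, so that $\|\phi\|^2_{L^2(K(t_0))}-\|\phi\|^2_{L^2(B(r_0))}(-t_0)$ equals a bulk integral of $\partial_t|\phi|^2=2\phi\cdot\pi$; estimating the bulk by Cauchy--Schwarz on each time slice then costs a factor $\|\phi\|_{L^2(B(r_0))}$ at \emph{intermediate} times, which the paper controls with the auxiliary differential inequality $\tfrac{\d}{\d t}\|\phi\|_{L^2(B(r_0))}\leq E_{\mathrm{loc}}^{1/2}$ before completing a square. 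Your version is the pointwise analogue: integrating $\bigl|\partial_t|\phi|\bigr|\leq|\pi|$ along vertical segments and then using Cauchy--Schwarz and Fubini lets the slice bounds $\int_{B(-t)}|\pi|^2\,\d^3x\leq 2E_{\mathrm{loc}}$ enter directly, with no intermediate control of $\|\phi\|_{L^2(B(r_0))}(t)$ and no completion of squares (and with the marginally better constant $\sqrt{2}$ in place of $2$); it is also manifestly gauge-independent via $\phi\cdot(A_0\phi)=0$, where the paper instead passes to temporal gauge. What the paper's integrated form buys in exchange is that it never needs a.e.\ differentiability of $|\phi|$ --- the Kato-type step you invoke, which for $E_2$-regularity solutions requires the smoothing/approximation argument you gesture at --- since it only ever manipulates $|\phi|^2$ under integrals. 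For the $\lambda\neq0$ refinement your argument and the paper's coincide: H\"older $\|\phi\|_{L^2(B(r_0))}\leq|B(r_0)|^{1/4}\|\phi\|_{L^4(B(r_0))}$ plus the quartic term in the energy density. One shared cosmetic caveat: that computation actually produces $E_{\mathrm{loc}}^{1/4}$ (with $\lambda^{-1/4}$ absorbed into $C$) for the boundary term, and both you and the paper silently promote it to $E_{\mathrm{loc}}^{1/2}$ in the final assembly; since the lemma is only ever used to furnish \emph{some} polynomial bound in $E_{\mathrm{loc}}$ and $t_0$ this is harmless, but strictly speaking the stated form requires either $E_{\mathrm{loc}}\gtrsim1$ or a restatement of the energy power.
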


\begin{proof} Since the bound is gauge independent, it suffices to prove it in the temporal gauge. Integrate $\nabla_a (|\phi|^2 K^a)$, $K^a = \partial_t$, over the region $\mathbf{K}(t_0)$ bounded by the past lightcone $K$ of the origin and the initial surface $\Sigma = \{ t = - t_0 \}$:
\[ \int_{\mathbf{K}(t_0)} \nabla_a ( |\phi|^2 K^a ) \, \d t \wedge \d^3 x = \int_{K(t_0)} | \phi |^2 \, r^2 \, \d r \, \d \Omega - \int_{B(r_0)\cap \Sigma} | \phi |^2 \, \d^3 x. \]
Now
\begin{align*} \left| \int_{\mathbf{K}(t_0)} \nabla_a ( |\phi|^2 K^a) \, \d t \wedge \d^3 x \right| & = \left| \int_{\mathbf{K}(t_0)} \partial_t ( |\phi|^2 ) \, \d t \wedge \d^3 x \right| \\
& \leq 2 \int_{-t_0}^{0} \int_{\mathbb{S}^2} \int_0^{-t} | \phi \cdot \pi |(t,r,\omega) \, r^2 \, \d r \, \d \Omega \, \d t \\
& \leq 2 \int_0^{t_0} \| \phi \|_{L^2(B(t))}(-t) \| \pi \|_{L^2(B(t))}(-t) \, \d t \\
& \leq 2 E_{\text{loc}}^{1/2} \int_0^{t_0} \| \phi \|_{L^2(B(r_0))}(-t) \, \d t,
\end{align*}
where we estimate the $L^2$ norm of $\phi$ on $B(r_0)$ by
\[ \frac{\d}{\d t} \| \phi \|^2_{L^2(B(r_0))} = 2 \int_{B(r_0)} \phi \cdot \pi \, \d^3 x \leq 2 \| \phi \|_{L^2(B(r_0))} E_{B(r_0)}^{1/2}. \]
This implies
\[ \frac{\d}{\d t} \| \phi \|_{L^2(B(r_0))}(t) \leq E_{\text{loc}}^{1/2}, \]
and so for $-t_0 \leq t \leq 0$
\[ \| \phi \|_{L^2(B(r_0))}(t) \leq \| \phi \|_{L^2(B(r_0))}(-t_0) + E_{\text{loc}}^{1/2} t_0. \]
Altogether then
\begin{align*} \| \phi \|^2_{L^2(K(t_0))} &\leq \| \phi \|^2_{L^2(B(r_0))}(-t_0) + 2 E^{1/2}_{\text{loc}} \int_0^{t_0} ( \| \phi \|_{L^2(B(r_0))}(-t_0) + E^{1/2}_{\text{loc}} t_0 ) \, \d t \\
& \leq \| \phi \|^2_{L^2(B(r_0))}(-t_0) + 4 E^{1/2}_{\text{loc}} \| \phi \|_{L^2(B(r_0))}(-t_0) t_0 + 4 E_{\text{loc}} t_0^2 \\
& \leq \left( \| \phi \|_{L^2(B(r_0))} (-t_0) + 2 E^{1/2}_{\text{loc}} t_0 \right)^2,
\end{align*}
which implies the first inequality. Now if $\lambda \neq 0$, since $B(r_0)$ is bounded we have
\[ \| \phi \|^2_{L^2(B(r_0))} \leq \| \phi \|^2_{L^4(B(r_0))} \frac{2}{\sqrt{3}} \sqrt{\pi} r_0^{3/2}, \]
so by \eqref{energyidentitycone}
\[ \| \phi \|_{L^2(B(r_0))} (-t_0) \leq C E_{\text{loc}}^{1/2} t_0^{3/4}. \]
Putting this into the first estimate completes the proof of the lemma.

\end{proof}

\pagebreak

\bibliographystyle{siam}
\bibliography{bibliography}

\end{document}